\newcommand{\Rset}{\mathbb{R}}
\newcommand{\Ecal}{{\cal E}}
\newcommand{\Gcal}{{\cal G}}
\newcommand{\Ncal}{{\cal N}}
\newcommand{\Ocal}{{\cal O}}
\newcommand{\Qcal}{{\cal Q}}
\newcommand{\Rcal}{{\cal R}}
\newcommand{\Vcal}{{\cal V}}
\newcommand{\Xcal}{{\cal X}}
\newcommand{\Abf}{{\bf A}}
\newcommand{\Gbf}{{\bf G}}
\newcommand{\Ibf}{{\bf I}}
\newcommand{\Pbf}{{\bf P}}
\newcommand{\Qbf}{{\bf Q}}
\newcommand{\Vbf}{{\bf V}}
\newcommand{\Wbf}{{\bf W}}
\newcommand{\Xbf}{{\bf X}}
\newcommand{\Ybf}{{\bf Y}}
\newcommand{\abf}{{\bf a}}
\newcommand{\gbf}{{\bf g}}
\newcommand{\lbf}{{\boldsymbol{\ell}}}
\newcommand{\pbf}{{\bf p}}
\newcommand{\qbf}{{\bf q}}
\newcommand{\rbf}{{\bf r}}
\newcommand{\ubf}{{\bf u}}
\newcommand{\vbf}{{\bf v}}
\newcommand{\wbf}{{\bf w}}
\newcommand{\xbf}{{\bf x}}
\newcommand{\ybf}{{\bf y}}
\newcommand{\zbf}{{\bf z}}
\newcommand{\xibarbf}{{\bar{\xibf}}}
\newcommand{\taubf}{{\boldsymbol{\tau}}}
\newcommand{\qbarbf}{{\bar{\qbf}}}
\newcommand{\xbarbf}{{\bar{\xbf}}}
\newcommand{\vbarbf}{{\bar{\vbf}}}
\newcommand{\1}{{\mathbf{1}}}
\newcommand{\xibf}{{\boldsymbol{\xi}}}
\newcommand{\Xibf}{{\boldsymbol{\Xi}}}
\newtheorem{assump}{Assumption}
\newcommand{\redline}{\raisebox{2pt}{\tikz{\draw[-,red,solid,line width = 1.5pt](0,0) -- (6mm,0);}}}
\newcommand{\blueline}{\raisebox{2pt}{\tikz{\draw[-,blue,solid,line width = 1.5pt](0,0) -- (6mm,0);}}}
\newcommand{\greenline}{\raisebox{2pt}{\tikz{\draw[-,green,solid,line width = 1.5pt](0,0) -- (6mm,0);}}}
\newcommand{\blackline}{\raisebox{2pt}{\tikz{\draw[-,black,solid,line width = 1.5pt](0,0) -- (6mm,0);}}}
\title{Fast Convergence Rates of Distributed Subgradient Methods with Adaptive Quantization}
\author{{Thinh T. Doan$^{\dagger,\star}$, Siva Theja Maguluri$^{\star}$, Justin Romberg$^{\dagger}$}\\
$^\dagger$ School of Electrical and Computer Engineering\\
$^\star$ H. Milton Stewart School of Industrial and Systems Engineering\\
Georgia Institute of Technology, GA, 30332, USA\\ 
{\tt\small \{thinhdoan,\,siva.theja\}@gatech.edu, jrom@ece.gatech.edu.}} 
\date{}
\begin{document}
\maketitle

\begin{abstract}
We study distributed optimization problems over a network when the communication between the nodes is constrained, and so information that is exchanged between the nodes must be quantized.  Recent advances using the distributed gradient algorithm with a quantization scheme at a fixed resolution have established convergence, but at rates significantly slower than when the communications are unquantized.

In this paper, we introduce a novel quantization method, which we refer to as  adaptive quantization, that allows us to match the convergence rates under perfect communications.  Our approach adjusts the quantization scheme used by each node as the algorithm progresses: as we approach the solution, we become more certain about where the state variables are localized, and adapt the quantizer codebook accordingly.  

We bound the convergence rates of the proposed method as a function of the communication bandwidth, the underlying network topology, and structural properties of the constituent objective functions.  In particular, we show that if the objective functions are convex or strongly convex, then using adaptive quantization does not affect the rate of convergence of the distributed subgradient methods when the communications are quantized, except for a constant that depends on the resolution of the quantizer. To the best of our knowledge, the rates achieved in this paper are better than any existing work in the literature for distributed gradient methods under finite communication bandwidths.  We also provide numerical simulations that compare convergence properties of the distributed gradient methods with and without quantization for solving distributed regression problems for both quadratic and absolute loss functions.     
\end{abstract}


\section{Introduction}

We consider a distributed subgradient algorithm for solving optimization problems of the form 
\begin{align}
    \underset{\xbf\in\Xcal}{\text{minimize }} f(\xbf)
    \triangleq \sum_{i=1}^n f_i(\xbf).\label{prob:obj}
\end{align}
Each functional $f_i$ in the sum above is associated with a computational node, and these nodes have connected into a network specified by a graph.  Each node has knowledge of only their function $f_i$, and so they must work together, communicating only with their neighbors on the graph, to find a minimizer of \eqref{prob:obj}.  The distributed subgradient ({\sf DSG}) method, described in full in Section~\ref{subsec:alg} below, is a popular approach for solving this type of problem.  In {\sf DSG}, each node keeps a local estimate of the decision variable $\xbf$ and iterates by communicating the local state to its neighbors on the graph, averaging the estimates it received from its neighbors, then taking a gradient step. The convergence properties of this algorithm are well understood, see \cite{NedicO2009} along with the other references in Section~\ref{subsec:related}, and essentially match the rates of standard centralized subgradient methods with a constant factor that depends on the connectivity of the graph.

In this paper, we take a step towards understanding how imperfect communication affects the convergence of {\sf DSG}.  In particular, we derive convergence rates for a modified version of the {\sf DSG} algorithm when the communications between the nodes are {\em quantized}, modeling scenarios in which the bandwidth available for communication is often limited.  Unlike previous works \cite{DoanMR2018b,ReisizadehMHP2018}, we show if the quantization intervals are adjusted as the algorithm approaches a solution, then we can match the convergence rates of unquantized {\sf DSG} within a constant that depends on the number of bits used for quantization.  We call this novel coding method {\em adaptive quantization}, as it changes at every iteration based on the stepsize being used for the subgradient descent.

\subsection{Main Contribution}\label{subsec:Contribution}
We first propose a modified version of {\sf DSG}, which directly takes into account the quantized error at every iteration. Second, we design a novel adaptive quantization method, where the nodes quantize their values based on the progress of their updates.  The entire method is summarized in Algorithm~\ref{alg:QDSG} and described fully in Section~\ref{sec:QDSG} below.

Our analytical contribution is to show that the convergence rates of {\sf DSG} are unaffected when the communications use adaptive quantization, except for a factor which captures the size of communication bandwidth.

Our analysis treats both the cases where the objective functions are convex and strongly convex.  When the $f_i$ are convex, we show that at iteration $k$, each node $i$ has an estimate $\zbf_i(k)$ that obeys
\[
    f(\zbf_i(k)) - f^* ~\lesssim~
    \frac{\Delta^2}{(1-\sigma_2)^2}\cdot \frac{\ln k}{\sqrt{k}},
\]
where $1-\sigma_2$ is the spectral gap that quantifies the connectivity of the underlying network and $\Delta$ is the resolution of the quantizer. When the $f_i$ are strongly convex, we derive a rate on the convergence of the $\zbf_i$ to the unique solution $\xbf^*$,
\[
    \|\zbf_i(k)-\xbf^*\|^2 ~\lesssim~
    \frac{\Delta^2}{(1-\sigma_2)^2}\cdot \frac{\ln k}{k}.
\]
These rates match those for the standard, unquantized verion of {\sf DSG}  \cite{NedicOR2018}.

The numerical results in Section~\ref{sec:simulation} show that for stylized problems, both smooth and not smooth, quantizing to $8$ bits is essentially the same as communicating real numbers, while using  $5$ or $6$ bits results in only a modest increase in the number iterations required to converge to a specified tolerance.

\subsection{Related Work}
\label{subsec:related}
The {\sf DSG} algorithms for solving problem \eqref{prob:obj} have a long history, probably first studied in \cite{Tsitsiklis1986} and recently received a wide attention; see for example,  \cite{NedicO2009,ShiLWY2015,GuLi2017, NedicOS2017,LorenzoS2016, XiMXAK2018, XiXK2018} and the recent survey paper \cite{NedicOR2018}. Convergence results of {\sf DSG} have been explicitly studied in this literature; however, they are mostly established under a critical assumption on the perfect communication between nodes. Such assumption is not often held in practice, therefore, there is a necessity to study the performance of {\sf DSG} under imperfect communication. In particular, our focus in this paper is to study the convergence rates of this method when information exchanged between the nodes are quantized, modeling the practical applications with finite communication bandwidth.      

Distributed algorithms with random (dithered) quantization have been considered in \cite{AysalCR2008} for solving network consensus problems, a special case of the problem considered in this paper. On the other hand, different variants of distributed gradient methods under quantized communication have been studied in \cite{JueyouGZX2016,  NedicOOT2008b,PuZJ2017, DoanMR2018a, DoanMR2018b, ReisizadehMHP2018,YiH2014}. In \cite{JueyouGZX2016, NedicOOT2008b} the authors only show the convergence to a neighborhood around the optimal of the problem, while an exact convergence has been studied in \cite{PuZJ2017,DoanMR2018a}; however, a condition on the growing communication bandwidth is assumed in the latter work. To remove such strong condition, the authors in \cite{ReisizadehMHP2018,DoanMR2018b} show the asymptotic convergence of {\sf DSG} methods under random quantization using only finite bandwidth. In particular, in \cite{ReisizadehMHP2018} the authors provide a rate in expectation  $\mathcal{O}(1/k^{(1-\gamma)/2})$, for some $\gamma\in(0,1)$, when the problem objectives are smooth and strongly convex.  On the other hand, in \cite{DoanMR2018b} we study distributed subgradient methods for nonsmooth problems and analyze their convergence rates by utilizing techniques from stochastic approximation approach. Specifically, such algorithms asymptotically converge to the optimal value in expectation at a rate $\mathcal{O}(ln(k)\,/\,k^{1/4})$ and  $\mathcal{O}(ln(k)\,/\,k^{1/3})$ for convex and strongly convex functions, respectively. The rates established in these two papers, however, are sub-optimal and much slower than the ones in this paper as stated in Section \ref{subsec:Contribution}. 

The adaptive quantization studied in this paper seems to share some similarity with the so-called ``zoom in" and ``zoom out" quantization to study the stability of linear systems \cite{BrockettL2000}. Moreover, this ``zoom in" and ``zoom out" quantization has also been applied in distributed optimization with finite bandwidths, where an asymptotic convergence to a solution has been derived in \cite{YiH2014}. However, there is a lack of understanding how fast the algorithm converges, which is one of the main focus of this paper.

We also want to note some related work \cite{Alistarh_NIPS2017,Alistarh_NIPS2018,Stich_NIPS2018} and the references therein, in which distributed stochastic gradient with quantization under master/worker models is considered. Such models consider a special star graph communication structure, while consensus-based gradient methods are designed for any network topology. In general, these two approaches are fundamentally different, therefore, the results studied in master/worker models cannot be extended to cover the problem considered in this paper. Moreover, the quantized communication constraint studied in this paper is one example of imperfect exchange of information between nodes.  Another example of imperfect exchange is latency in the communications.  Convergence rates of {\sf DSG} optimization methods in the presence of communication delays have been studied in \cite{ WuYLYS2018, TianSDS2018,DoanBS2018, DoanBS2017}. 

Finally, there are some related methods based on primal-dual approach for solving problem \eqref{prob:obj}, such as, the accelerated primal-dual methods \cite{LanLZ2017, Scaman2018}, the \textit{alternating direction method of multipliers (\sf ADMM)} \cite{WeiO2012,BoydPCPE2011,ShiLYWY2014, ChangHW2015,ChangHLW2016}, and the \textit{distributed dual methods} (mirror descent/dual averaging) \cite{DuchiAW2012, TsianosLR2012, DoanBNB2019}. Our focus in this paper will be on {\sf DSG} algorithms, as they are both simple and have convergence guarantees that are as strong or stronger than those for dual methods.

\subsection{Notation}\label{subsec:notation}
We introduce here a set of definitions and notation that is used throughout this paper. We use boldface to denote vectors in $\Rset^d$ to distinguish them from scalars in $\Rset$. Given a collection of vectors $\xbf_1,\ldots,\xbf_n$ in $\Rset^d$, we denote by $\Xbf$ a matrix in $\Rset^{n\times d}$, whose $i$-th row is $\xbf_i^T$. We then denote by $\|\xbf\|$ and $\|\Xbf\|$ the Euclidean norm  and the Frobenius norm of the vector $\xbf$ and the meatrix $\Xbf$, respectively.  We use $\1\in\Rset^{d}$ to denote the the vector whose entries are all $1$ and $\Ibf\in\Rset^{n\times n}$ to denote the identity matrix. Given a closed convex set $\Xcal$, we denote by $[\xbf]_{\Xcal}$ the projection of $\xbf$ to $\Xcal$. 

Given a nonsmooth convex function $f:\Rset^{d}\rightarrow\Rset$, we denote by $\partial f(\xbf)$ its subdifferential  at $x$, which is defined as the set of subgradients of $f$ at $\xbf$, i.e., $\partial f(\xbf) \triangleq \{\gbf\in\Rset^{d}\,|\, f(\ybf) \geq f(\xbf) + \gbf^T(\ybf-\xbf), \; \forall \ybf\in\Rset^d \}$.  Since $f$ is convex, $\partial f(\cdot)$  is nonempty. A function $f$ is said to be $L$-Lipschitz continuous if 
\begin{align}
|\,f(\xbf)-f(\ybf)\,| \leq L\|\xbf-\ybf\|,\quad \forall\; \xbf,\ybf\in \Rset^d. \label{notation:Lipschitz}
\end{align}   
Note that the $L$-Lipschitz continuity of $f$ is equivalent to the subgradients of $f$ being uniformly bounded by $L$ \cite{ShalevShwartz2012}. A function $f$ is said to be $\mu$-strongly convex if  $f$ satisfies
\begin{align}
f(\ybf) - f(\xbf) -\gbf(\xbf)^{T}(\ybf-\xbf)\geq \frac{\mu}{2}\|\ybf-\xbf\|^2\qquad  \forall \xbf,\ybf. \label{notation:sc}
\end{align}

Note that since the set $\Xcal$ is compact, there exists a point $\xbf^*$ which solves Problem \eqref{prob:obj}. However, $\xbf^*$ may not be unique. We will use $\Xcal^*$ to denote the set of optimal solutions to Problem \eqref{prob:obj}. Given a solution $\xbf^*\in\Xcal^*$ we denote $f^* =\sum_{i=1}^n f_i(\xbf^*)$. Also, due to the compactness of $\Xcal$, the subgradients of $f_i$ are uniformly bounded in $\Xcal$. We state this observation formally in the following proposition. 
\begin{proposition}\label{prop:bounded_subg}
There exists a positive constant $L_i$, for all $i\in\Vcal$, such that the $2$-norm of subgradients $\gbf_i(\cdot)$ of $f_i$ are uniformly bounded by $L_i$ in $\Xcal$, i.e., the following condition holds 
\begin{align}
\|\gbf_i(\xbf)\| \leq L_i,\qquad \text{for all }\xbf\in\Xcal.\label{prop:bounded_subg_ineq} 
\end{align}
\end{proposition}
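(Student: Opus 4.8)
The plan is to combine the compactness of $\Xcal$ with the elementary fact that a finite-valued convex function on $\Rset^d$ is continuous, and then to bound the norm of any subgradient by testing the subgradient inequality along a unit step in the direction of that subgradient. Since $\Xcal$ is compact it is bounded, so I would first fix a slightly enlarged compact set, for instance $\Xcal' \triangleq \{\xbf\in\Rset^d : \mathrm{dist}(\xbf,\Xcal)\le 1\}$. The point of this enlargement is that for every $\xbf\in\Xcal$ and every subgradient $\gbf\in\partial f_i(\xbf)$ with $\gbf\neq 0$, the shifted point $\xbf + \gbf/\|\gbf\|$ still lies in $\Xcal'$, since it is at distance exactly $1$ from $\xbf\in\Xcal$.

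Next I would invoke the standard result from convex analysis that a convex function which is finite on all of $\Rset^d$ is continuous there. Because $\Xcal'$ is compact, continuity guarantees that $f_i$ attains a finite maximum and minimum on $\Xcal'$, so the oscillation
\[
M_i \triangleq \max_{\ybf\in\Xcal'} f_i(\ybf) - \min_{\ybf\in\Xcal'} f_i(\ybf)
\]
is finite. This is the only quantitative input the argument needs.

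Finally I would apply the defining subgradient inequality. Fix $\xbf\in\Xcal$ and $\gbf\in\partial f_i(\xbf)$; if $\gbf=0$ the bound is immediate, so assume $\gbf\neq 0$ and set $\ybf = \xbf + \gbf/\|\gbf\|\in\Xcal'$. Then
\[
f_i(\ybf) \ge f_i(\xbf) + \gbf^T(\ybf-\xbf) = f_i(\xbf) + \|\gbf\|,
\]
which rearranges to $\|\gbf\| \le f_i(\ybf) - f_i(\xbf) \le M_i$. Taking $L_i \triangleq M_i$ then yields the uniform bound \eqref{prop:bounded_subg_ineq} for every $\xbf\in\Xcal$, and repeating this for each $i\in\Vcal$ completes the proof.

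The only genuine subtlety, and the step I would flag as the crux, is the boundedness of $f_i$ on the enlarged set, which rests on the continuity (equivalently, the local Lipschitz continuity) of finite convex functions on $\Rset^d$. This is a classical fact that I would simply cite; alternatively, one can establish the local Lipschitz property directly and cover the compact set $\Xcal$ by finitely many balls on which $f_i$ is Lipschitz, taking $L_i$ to be the largest of the resulting Lipschitz constants. Everything beyond this is routine, and indeed the bound dovetails with the Lipschitz/bounded-subgradient equivalence already noted after \eqref{notation:Lipschitz}.
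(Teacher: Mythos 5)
Your argument is correct and complete. The paper itself does not actually prove this proposition---it simply asserts the uniform boundedness of the subgradients as an ``observation'' following from the compactness of $\Xcal$---so your write-up supplies exactly the standard argument the authors are implicitly relying on: enlarge $\Xcal$ to a compact $\Xcal'$, use the continuity (local Lipschitzness) of a finite convex function on $\Rset^d$ to bound its oscillation $M_i$ on $\Xcal'$, and then test the subgradient inequality at $\ybf=\xbf+\gbf/\|\gbf\|$ to get $\|\gbf\|\le M_i$. No gaps; the only external input is the classical continuity fact, which you correctly flag and which is legitimate to cite.
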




\section{Distributed Subgradient Methods}\label{sec:QDSG}
For solving problem \eqref{prob:obj}, we are interested in {\sf DSG} methods \cite{NedicOP2010}, where each node $i$ maintains its own version of the decision variables $\xbf_i\in\Rset^{d}$; the goal is to have all the $\xbf_i$ converge to $\xbf^*$, a solution of problem \eqref{prob:obj}. Each node is only allowed to interact with its neighbors that are directly  connected to it through a connected and undirected graph $\Gcal = (\Vcal,\Ecal)$, where $\Vcal = \{1,\ldots,n\}$ and  $\Ecal = (\Vcal\times\Vcal)$ are the vertex and edge sets, respectively. Each node $i$ then iteratively updates $\xbf_i$ as 
\begin{align}
\xbf_i(k+1) = \left[\sum_{j\in\Ncal_i} a_{ij}\xbf_j(k) \; - \; \alpha(k)  \gbf_i(\xbf_i(k))\right]_{\Xcal},\label{distributed:DSG}
\end{align}  
where $\alpha(k)$ is some sequence of stepsizes, $\gbf_i(\xbf_i(k))\in\partial f_i(\xbf_i(k))$, and $\Ncal:= \{ j \in \Vcal\; |\; (i, j) \in\Ecal\}$ is the set of node $i'$s neighbors. The $a_{ij}$ above are positive weights that can be non-zero if there is an edge between nodes $i$ and $j$, and otherwise can be assigned by node $i$.  We will collect these weights into a $n\times n$ matrix $\Abf$, and assume it meets the following conditions throughout.
\begin{assump}\label{assump:doub_stoch}
The matrix $\Abf$, whose $(i,j)$-th entries are $a_{ij}$, is doubly stochastic, i.e., $\sum_{i=1}^n a_{ij} =1$ for all $j$ and $\sum_{j=1}^n a_{ij} = 1$ for all $i$. Moreover, $\Abf$ is irreducible and aperiodic. Finally, the weights $a_{ij} > 0$ if and only if $(i, j) \in \Ecal$ otherwise $a_{ij} = 0$.
\end{assump} 
This assumption also implies that $\Abf$ has a largest singular value of $1$, and its other singular values are strictly less than $1$; see for example, the Perron-Frobenius theorem \cite{HJ1985}.  We denote by $\sigma_2\in(0,1)$ the second largest singular value of $\Abf$, which is a key quantity in the analysis of the mixing time of a Markov chain with transition probabilities given by $\Abf$.

  
\subsection{Adaptive Quantization}\label{subsec:adaptive_quant}
Each iteration in \eqref{distributed:DSG} requires every node to communicate its estimate of the decision variables to its neighbors.  Theorems~\ref{thm_convex:rate} and \ref{thm_sconvex:rate} below study the convergence rate of a modified version of \eqref{distributed:DSG} when these communications are quantized using our proposed adaptive quantization method. We first present in this section some fundamentals of quantized communication.

To explain the main idea of our approach, we start with the uniform quantization method to quantize a single real number $x\in[\ell,u]$. In particular, we divide the interval into $B$ bins whose end points are denoted by $\tau_i$, $\ell = \tau_1\leq\tau_2\leq\ldots\leq\tau_B = u$.   We assume that the points $\tau_i$ are uniformly spaced with distance $\Delta$, i.e., $\Delta = \tau_{i+1}-\tau_i = (u-\ell)\,/\,(B-1)$ for all $i=0,\ldots,B-1$.  Thus $b = \lceil\log_2(B)\rceil$ bits can be used to index the $\{\tau_i\}$.  

Next, given a value $x\in[\ell,u]$ we denote by $q = \Qcal(x)$ its quantized value where  
\begin{align}
\Qcal(x) \triangleq \min_{\ell} |\tau_{\ell}-x|.\label{notation:Q_operator}  
\end{align}
If $\tau_{i}$ and $\tau_{i+1}$ achieve the minimal value in Eq.\ \eqref{notation:Q_error}, then without loss of generality we set $\Qcal(x) = \tau_i$. Also, by Eq.\ \eqref{notation:Q_operator} the quantized error is given as 
\begin{align}
| x - \Qcal(x) |\leq \Delta = \frac{u-\ell}{B-1}.\label{notation:Q_error}
\end{align}
When the quantized interval depends on time, i.e., $[\ell(k),u(k)]$, we denote by $\Qcal_k(x)$ the uniform quantization of $x$ over the time-varying interval $[\ell(k),u(k)]$ at time $k$. We can see from Eq. \eqref{notation:Q_error} that when $B$ is fixed the quantized error depends directly on the size of the interval $[\ell(k),u(k)]$. The main idea of our approach is to refine this interval at every time step so that this interval is shrinking, which implies that the quantized error decays to zero. We will refer to this scheme as  adaptive quantization, where a distributed implementation will be proposed in the next section.  

Moreover, since the constraint set $\Xcal$ is compact, it is contained in a rectangular set
\begin{align*}
\Xcal\subset\Rcal \triangleq [\lbf,\ubf] = [\ell^1,u^1]\times\ldots\times[\ell^d,u^d],
\end{align*}
for some $\{(\ell^i,u^i)\}$. We quantize a $\xbf\in\Rcal$ by applying the procedure above independently on each component, reusing the notation $\Qcal(\xbf)$ to indicate that \eqref{notation:Q_operator} is applied component wise. Thus, in this case the total number of bits required to quantize the whole vector is $b\times d$. Moreover, the constants in the convergence results presented below will be a function of the interval lengths $u^i-\ell^i$ for each coordinate $i$ of $\Xcal$.

Finally, as will be seen in Eq.\ \eqref{distributed:QDSG} in the next section, to implement the adaptive quantization method and update its estimate each node $i\in\Vcal$ has to apply the following two steps of its encoding/decoding scheme in each iteration $k\geq0$. 
\begin{enumerate}[leftmargin = 1.1cm]
\item Node $i$ computes the quantized interval $\Rcal_i(k)$ to find its quantized value $\qbf_i(k) = \Qcal_{k}(\xbf_i(k))$ by using the uniform quantization over this interval. As mentioned, this interval has to be computed in such a way that the quantized error $\Delta_i(k) = \xbf_i(k) - \qbf_i(k)$ decays to zero.
\item We note that for each iteration $k\geq0$ node $i$ only receives a sequence of $b$ bits $\qbf_j^b(k)$, e.g. $\qbf_j^b(k) = 01010010$ a sequence of $8$ bits, representing the value of $\qbf_j(k)$ over $\Rcal_j(k)$. Thus, node $i$ has to decode $\qbf_j^b(k)$ to recover $\qbf_j(k)$. This step can be done if node $i$ knows $\Rcal_j(k)$ for $j\in\Ncal_i$.  
\end{enumerate}

\subsection{Distributed Subgradient Methods under Adaptive Quantization}\label{subsec:alg}
Our focus in this paper is to study the impact of quantized communication between the nodes on the performance of {\sf DSG}. In particular, at any iteration $k\geq0$ the nodes are only allowed to send and receive the quantized values of their estimates to their neighboring nodes. Due to the quantization, we first modify the update in Eq.\ \eqref{distributed:DSG} to take into account the quantized error. That is, each node $i$, for all $i\in\Vcal$, now considers the following update
\begin{align}
\xbf_i(k+1) = \left[\underbrace{\xbf_i(k) - \qbf_i(k)}_{\text{``quantization error"}} + \sum_{j\in\Ncal_i}a_{ij}\qbf_j(k) \; - \; \alpha(k)  \gbf_i(\xbf_i(k))\right]_{\Xcal},\label{distributed:QDSG}
\end{align}
where $\qbf_i(k) = \Qcal_k(\xbf_i(k))$, for all $i\in\Vcal$, is the quantized value of $\xbf_i(k)$ over some interval $\Rcal_i(k) \triangleq [\lbf_i(k),\ubf_i(k)]$ for each step $k\geq0$. The update \eqref{distributed:QDSG} has a simple interpretation as follows. At time $k\geq0$, each node $i$ first obtains the quantized value $\qbf_i(k)$ of its value $\xbf_i(k)$. Each node $i$ then formulates the weighted average of its quantized value $\qbf_i(k)$ and the quantized values $\qbf_j(k)$ received from its neighbors $j\in\Ncal_j$, with the goal of seeking a consensus on their estimates. In addition, each node also introduces its quantized error into its own update, with the goal of eliminating the bias due to the quantized error over the network.  Each node then moves along the subgradients of its respective objective function to update its estimates, pushing the consensus point toward the optimal set $\Xcal^*$. The distributed subgradient algorithm under random quantization is formally stated in Algorithm \ref{alg:QDSG}.   

\begin{algorithm}[h]
\caption{Distributed Subgradient Algorithm Under Adaptive Quantization}
\vspace{0.2cm}
\begin{enumerate}[leftmargin = 4mm]
\item \textbf{Initialize}: Let $L = \underset{i\in\Vcal}{\sum}\; L_i,$ $\gamma = \frac{96 + 48L}{1-\sigma_2}.$

Each node $i\in\Vcal$ initializes 
\begin{itemize}
\item[a.] Divide $[\lbf,\ubf]$ (that contains $\Xcal$) into $B^d$ rectangular bins uniformly componentwise as described above, i.e., $\lbf = \taubf_1\leq \ldots\leq \taubf_{B^d} = \ubf$ .\vspace{0.1cm}
\item[b.] Set $\xbf_i(0) =\qbf_i(0) = \taubf_{m}$ for some arbitrarily index $m\in[1,B^d]$. Compute $\qbf_i^b(0)$ using the quantization scheme $\Qcal_{0}(\xbf_i(0))$ over the set $[\lbf,\ubf]$.
\item[c.] A sequence of positive and nonincreasing step sizes $\{\alpha(k)\}.$ 
\end{itemize}
\vspace{0.2cm}
\item \textbf{Iteration}:  For $k=0,1,\ldots,$ node $i\in\Vcal$ implements
\begin{itemize}
\item[a.] Send $\qbf_i^{b}(k)$ to node $j\in\Ncal_i$
\item[b.] Receive $\qbf_j^b(k)$ from node $j\in\Ncal_i$ 
\begin{itemize}
\item If $k=0$ then $\Rcal_j(0) = [\lbf,\ubf]$, otherwise  
\[
\Rcal_j(k) \triangleq \left[\qbf_j(k-1) - \frac{\gamma}{2}\alpha(k-1)\1,\,\qbf_j(k-1) + \frac{\gamma}{2}\alpha(k-1)\1\right]
\]
\item Recover $\qbf_j(k)$ from $\qbf_j^{b}(k)$ by using uniform quantization over $\Rcal_j(k)$
\end{itemize}
\item[c.] Use $\qbf_j(k)$ and update
\begin{align*}
\xbf_i(k+1) = \left[\xbf_i(k) - \qbf_i(k) + \sum_{j\in\Ncal_i}a_{ij}\qbf_j(k) \; - \; \alpha(k)  \gbf_i(\xbf_i(k))\right]_{\Xcal}.
\end{align*}
\item[d.] Compute $\qbf_i^b(k+1)$ and $\qbf_i(k+1)$ by using $\Qcal_{k+1}(\xbf_i(k+1))$ over the interval 
\[
\Rcal_i(k+1) \triangleq \left[\qbf_i(k) - \frac{\gamma}{2}\alpha(k)\1,\,\qbf_i(k) + \frac{\gamma}{2}\alpha(k)\1\right]
\]
\item[e.] Update the output 
\begin{align}
\zbf_i(k) = \frac{\sum_{t=0}^{k}\alpha(t)\xbf_i(t)}{\sum_{t=0}^{k}\alpha(t)}\cdot\label{alg_QDSG:zi}
\end{align}
\end{itemize}
\end{enumerate}
\label{alg:QDSG}
\end{algorithm}

Steps (a)-(d) in Algorithm \ref{alg:QDSG} are to guarantee the conditions in the two steps $(1)$ and $(2)$ mentioned in the preceding subsection. In particular, step (d) shows how each node $i$ defines its quantized interval $\Rcal_i(k)$ and computes $\qbf_i(k)$ and $\qbf_i^{b}(k)$. In addition, it is clear from the definition of $\Rcal_i(k)$ that 
\[
\|\Delta_i(k)\| = \|\xbf_i(k)-\qbf_i(k)\| \lesssim~ \Ocal(\alpha(k)), 
\]
which decays to zero. On the other hand, step (b) shows how node $i$ can recover the quantized value $\qbf_j(k)$ from the encoded bits, $\qbf_j^b(k)$ and $\Rcal_j(k)$. Note that the interval $\Rcal_j(k)$ can be calculated locally at node $i$ at any time $k$ since node $i$ knows the previous value $\qbf_j(k-1)$ for $j\in\Ncal_i$. Thus, Algorithm \ref{alg:QDSG} is fully distributed, that is, the updates at the nodes are executed in parallel and based only on local interactions between nodes. Finally, we show in the next section that by executing Eq.\ \eqref{distributed:QDSG} we have $\xbf_i(k)\in\Rcal_i(k)$ for all $k\geq0$. This observation explains our motivation in defining $\Rcal_i(k)$ in step (d).

\section{Main Results}\label{sec:results}
This section establishes rates of convergence for Algorithm~\ref{alg:QDSG} in the cases where $f_i$ are convex and strongly convex.  These results show that under adaptive quantization, the convergence rates of the distributed subgradient algorithm are essentially unaffected by the finite communication bandwidths, except for a constant factor that captures the size of these bandwidths.

The main steps of our analysis are as follows. As we observed in the previous section, the adaptive quantization scheme forces the quantization error $\Delta_i(k)$ to decay to zero at the same rate as the step sizes $\alpha(k)$. 
Our first step is to use this fact to show  that the distance between the estimates $\xbf_i(k)$ to the average $\xbarbf(k)$ converges to zero, implying the nodes eventually reach consensus.
Next we will show that the update of (descent on) $\xbarbf(k)$ mirrors the update of standard centralized subgradient methods.
This allows us to study the convergence rate of Algorithm \ref{alg:QDSG} using the standard outline for the analysis of centralized subgradient methods. 

When the $f_i$ are convex, and we use stepsizes $\alpha(k)= 1/ \,\sqrt{k+1}$, we show that the time-weighted average $\zbf_i(k)$ in \eqref{alg_QDSG:zi} obeys
\[
    f(\zbf_i(k)) - f^* ~\lesssim~
    \frac{1}{(2^b-1)^2(1-\sigma_2)}\cdot \frac{\ln k}{\sqrt{k}},
\]
where $1-\sigma_2$ is the spectral gap that quantifies the connectivity of the underlying network, and $ 1\,/\,(2^b-1)$ is the resolution of the quantizer using $b$ communication bits. When the objective function is strongly convex, using  stepsizes $\alpha(k)= a/ \,(k+1)$ for appropriately chosen constant $a$, we have a refined rate on the convergence of the decision variables themselves, 
\[
    \|\zbf_i(k)-\xbf^*\|^2 ~\lesssim~
    \frac{1}{(2^b-1)^2(1-\sigma_2)}\cdot \frac{\ln k}{k}.
\]
Aside from the constant $ 1\,/\,(2^b-1)$, these results match the standard results for the distributed subgradient method with perfect communication, meaning that the quantization does not qualitatively affect the behavior of the algorithm.

\subsection{Preliminaries}
\label{subsec:prelim}
Given a vector $\vbf\in\Rset^d$ we denote by $\xibf(\vbf)$ the error due to the projection of $\vbf$ on to $\Xcal$, 
\[
    \xibf(\vbf) = \vbf - \left[\vbf\right]_{\Xcal},
\]
and rewrite Eq.\ \eqref{distributed:QDSG} as 
\begin{align}
\begin{aligned}
&\vbf_i(k) = \left(\sum_{j\in\Ncal_i} a_{ij}\xbf_j(k)\right) + \xbf_i(k)-\qbf_i(k) + \sum_{j\in\Ncal_i}a_{ij}(\qbf_j(k)-\xbf_j(k)) - \alpha(k)\gbf_i(\xbf_i(k)), \\
&\xbf_i(k+1) = \left[\vbf_i(k)\right]_{\Xcal} = \vbf_i(k) - \xibf_i(\vbf_i(k)). 
\end{aligned}\label{prelim:xi}
\end{align}
Stacking the $\xbf_i^T$ as rows in the $n\times d$ matrix $\Xbf$ (and doing likewise with the $\vbf_i,\qbf_i,\xibf_i$), we can write the above in matrix form as
\begin{align}
\begin{aligned}
\Vbf(k) &= \Abf\Xbf(k) + (\Ibf-\Abf)(\Xbf(k)-\Qbf(k)) - \alpha(k)\Gbf(\Xbf(k)),\\
\Xbf(k+1) &= \Vbf(k) - \Xibf(\Vbf(k)),
\end{aligned}\label{prelim:X}
\end{align}
where $\Abf$ is the adjacency matrix in Assumption \ref{assump:doub_stoch}.
Let $\xbarbf(k)$ and $\bar{\xibf}(k)$ be the averages of $\xbf_i(k)$ and $\xibf_i(\vbf_i(k))$ across all nodes at time $k$:
\begin{align*}
\xbarbf(k) = \frac{1}{n}\sum_{i=1}^n\xbf_i(k) = \frac{1}{n}\Xbf^T\1 \in\Rset^{d}\quad\text{and}\quad \bar{\xibf}(k) = \frac{1}{n}\sum_{i=1}^n\xibf_i(\vbf_i(k)) =\frac{1}{n} \Xibf(\Vbf(k))^T\1\in\Rset^{d}.   
\end{align*}
Since $\1^T\Abf = \1^T$, \eqref{prelim:X} gives
\begin{align}
\begin{aligned}
\vbarbf(k) &= \xbarbf(k) - \frac{\alpha(k)}{n}\sum_{i=1}^n\gbf_i(\xbf_i(k)),\\
\xbarbf(k+1) &= \vbarbf(k) - \bar{\xibf}(k). 
\end{aligned}\label{prelim:xbar}
\end{align}
Recall that $\Delta_i(k) = \xbf_i(k) - \qbf_i(k)\in\Rset^{d}$ and let $\Delta(k)$ be defined as
\begin{align*}
\Delta(k) = \left[\begin{array}{cc}
-\;\Delta_1(k)^T\;-  \\
\cdots\\
-\;\Delta_n(k)^T\;-
\end{array} \right]\in\Rset^{n\times d}\cdot
\end{align*}

We now consider the following sequence of lemmas, which provides fundamental preliminaries for our main results given in the next sections. In the sequel, we will consider two choices of the step sizes $\alpha(k)$, that is, $\alpha(k) = 1/\sqrt{k+1}$ or $\alpha(k) = 1/(k+1)$. These choices of step sizes also are used to establish our main results in the next section. For ease of exposition we delay all the proofs of the results in this section to Appendix \ref{sec:proofs}.

We first provide an upper bound for the projection error $\xibf_i$ in the following lemma.

\begin{lemma}\label{lem:projection}
Suppose that Assumption \ref{assump:doub_stoch} holds. Let the sequence $\{\xbf_i(k)\}$, for all $i\in\Vcal$, be generated by Algorithm \ref{alg:QDSG}. Then for all $i\in\Vcal$ we have
\begin{align}
\|\,\xibf_i(\vbf_i(k))\,\| \leq \sum_{j\in\Ncal_i}a_{ij}\left\|\Delta_i(k) - \Delta_j(k)\right\| + L_i\alpha(k).\label{lem_project:Ineq1}
\end{align}
In addition, let $L=\sum_{i\in\Vcal}L_i$. Then, we obtain 
\begin{align}
\sum_{i=1}^n\|\,\xibf_i(\vbf_i(k))\,\|^2 \leq 8\sum_{i=1}^n\|\Delta_i(k)\|^2 + 2L^2\alpha^2(k).\label{lem_project:Ineq2}
\end{align}
\end{lemma}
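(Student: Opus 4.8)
The plan is to prove the pointwise bound \eqref{lem_project:Ineq1} first and then derive the summed bound \eqref{lem_project:Ineq2} by squaring and summing. The key tool is the non-expansiveness of the Euclidean projection: since $[\vbf_i(k)]_{\Xcal}$ is the nearest point of $\Xcal$ to $\vbf_i(k)$, for \emph{any} comparison point $\ybf\in\Xcal$ we have $\|\xibf_i(\vbf_i(k))\| = \|\vbf_i(k)-[\vbf_i(k)]_{\Xcal}\| \le \|\vbf_i(k)-\ybf\|$. The first thing I would do is choose the comparison point cleverly: take $\ybf = \sum_{j\in\Ncal_i}a_{ij}\xbf_j(k)$, which lies in $\Xcal$ because it is a convex combination (row stochasticity gives $\sum_{j\in\Ncal_i}a_{ij}=1$) of the iterates $\xbf_j(k)\in\Xcal$.

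With this choice I would subtract $\ybf$ from the first line of \eqref{prelim:xi}. The weighted-average term $\sum_{j\in\Ncal_i}a_{ij}\xbf_j(k)$ cancels against $\ybf$, and using $\xbf_i(k)-\qbf_i(k)=\Delta_i(k)$ together with $\qbf_j(k)-\xbf_j(k)=-\Delta_j(k)$ leaves $\vbf_i(k)-\ybf = \Delta_i(k) - \sum_{j\in\Ncal_i}a_{ij}\Delta_j(k) - \alpha(k)\gbf_i(\xbf_i(k))$. The crucial algebraic step is then to use $\sum_{j\in\Ncal_i}a_{ij}=1$ to write $\Delta_i(k)=\sum_{j\in\Ncal_i}a_{ij}\Delta_i(k)$, collapsing the first two terms into the single sum $\sum_{j\in\Ncal_i}a_{ij}(\Delta_i(k)-\Delta_j(k))$. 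The triangle inequality together with the uniform subgradient bound $\|\gbf_i(\xbf_i(k))\|\le L_i$ from Proposition \ref{prop:bounded_subg} then yields \eqref{lem_project:Ineq1} directly.

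For \eqref{lem_project:Ineq2} I would square \eqref{lem_project:Ineq1} and apply $(a+b)^2\le 2a^2+2b^2$, splitting into a quantization-difference term and a subgradient term. On the first term I would use Jensen's inequality (convexity of $t\mapsto t^2$ against the weights $a_{ij}$ that sum to one) to pull the square inside the sum, followed by $\|\Delta_i(k)-\Delta_j(k)\|^2\le 2\|\Delta_i(k)\|^2+2\|\Delta_j(k)\|^2$. Summing over $i$, the diagonal pieces give $2\sum_i\|\Delta_i(k)\|^2$, and the cross pieces are handled by swapping the order of summation and invoking column stochasticity $\sum_i a_{ij}=1$, which again produces $2\sum_j\|\Delta_j(k)\|^2$; the leading factor of two then assembles into the constant $8$. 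For the subgradient term I would use $\sum_i L_i^2\le(\sum_i L_i)^2 = L^2$, giving the $2L^2\alpha^2(k)$ contribution.

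I do not anticipate any serious obstacle; the argument is essentially bookkeeping around the two stochasticity properties of $\Abf$. The only genuinely non-mechanical decisions are (i) selecting the comparison point $\ybf=\sum_{j\in\Ncal_i}a_{ij}\xbf_j(k)$ so that the consensus term disappears and the residual collapses into the weighted differences $\Delta_i(k)-\Delta_j(k)$ via row stochasticity, and (ii) deploying column stochasticity when summing over $i$ in the second part to keep the right-hand side proportional to $\sum_i\|\Delta_i(k)\|^2$ rather than some graph-dependent quantity. Both bounds hold for all $i\in\Vcal$ at a fixed iteration $k$, so no induction on $k$ is required.
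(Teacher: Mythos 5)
Your proposal is correct and follows essentially the same route as the paper: non-expansiveness of the projection with the comparison point $\sum_{j\in\Ncal_i}a_{ij}\xbf_j(k)\in\Xcal$, row stochasticity to collapse the residual into $\sum_{j\in\Ncal_i}a_{ij}(\Delta_i(k)-\Delta_j(k))$, and then Jensen plus $(a+b)^2\le 2a^2+2b^2$ and column stochasticity to obtain the constants $8$ and $2L^2$. No gaps.
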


Next we provide an upper bound for the consensus errors $\|\xbf_i(k)-\xbarbf(k)\|$ in the following lemma. 

\begin{lemma}\label{lem:consensus}
Suppose that Assumption \ref{assump:doub_stoch} holds. Let the sequence $\{\xbf_i(k)\}$, for all $i\in\Vcal$, be generated by Algorithm \ref{alg:QDSG}. In addition, let $\{\alpha(k)\}$ be a nonnegative nonincreasing sequence of stepsizes. Then, we have
\begin{align}
\|\Xbf(k+1)-\1\xbarbf(k+1)^T\| &\leq 6\sum_{t=0}^{k}\sigma_2^{k-t}\|\Delta(t)\| + 3L\sum_{t=0}^{k}\sigma_2^{k-t}\alpha(t).\label{lem_consensus:consensus_bound}
\end{align}
\end{lemma}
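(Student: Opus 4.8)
The plan is to unroll the recursion for the centered iterates $\Xbf(k)-\1\xbarbf(k)^T$ and control the resulting sum using the spectral contraction of $\Abf$ on the subspace orthogonal to $\1$.

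Let me work out the key algebraic identity first. Set $\Mbf := \Ibf - \frac{1}{n}\1\1^T$, the projection onto the orthogonal complement of $\1$; the centered matrix is $\Mbf\Xbf(k)$. Starting from the matrix update \eqref{prelim:X}, I would left-multiply by $\Mbf$ and exploit three facts: $\Mbf\Abf = \Abf\Mbf = \Mbf\Abf\Mbf$ (since $\Abf$ is doubly stochastic, $\1$ is a left/right eigenvector), that $\Mbf(\Ibf-\Abf) = (\Ibf-\Abf)\Mbf$ has operator norm controlled by $\|\Abf\|$ restricted to the orthogonal complement, and that the projection $\Mbf$ is a contraction. The crucial spectral estimate is $\|\Mbf\Abf\Ybf\| = \|\Abf\Mbf\Ybf\| \le \sigma_2\|\Mbf\Ybf\|$ for any $\Ybf$, using that $\sigma_2$ is the second largest singular value of $\Abf$. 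I would then track how the quantization term $(\Ibf-\Abf)(\Xbf(k)-\Qbf(k)) = (\Ibf-\Abf)\Delta(k)$, the subgradient term $\alpha(k)\Gbf(\Xbf(k))$, and the projection error $\Xibf(\Vbf(k))$ enter after centering.

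Next I would iterate the one-step bound. Writing $r(k) := \|\Mbf\Xbf(k)\| = \|\Xbf(k)-\1\xbarbf(k)^T\|$, the one-step inequality should take the form $r(k+1)\le \sigma_2\, r(k) + (\text{quantization contribution from }\Delta(k)) + (\text{subgradient contribution from }\alpha(k))$. Unrolling from $k$ down to $0$ produces the geometric sums $\sum_{t=0}^{k}\sigma_2^{k-t}(\cdots)$ that appear in \eqref{lem_consensus:consensus_bound}. To assemble the final constants I would use Lemma~\ref{lem:projection}: the projection error $\Xibf(\Vbf(k))$ is bounded through \eqref{lem_project:Ineq2} by quantities of order $\|\Delta(k)\|$ and $L\alpha(k)$, so both the $(\Ibf-\Abf)\Delta$ term and the $\Xibf$ term feed into the $\|\Delta(t)\|$-sum, while the subgradient term $\alpha(k)\Gbf$ (bounded in Frobenius norm by $\sqrt{\sum_i L_i^2}\le L$) feeds the $\alpha(t)$-sum. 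Bookkeeping the norms $\|\Ibf-\Abf\|\le 2$ and $\|\Mbf\|\le 1$ together with the factor from \eqref{lem_project:Ineq2} should yield the stated numerical constants $6$ and $3L$.

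The main obstacle I anticipate is getting the per-step constants and the treatment of the projection error exactly right, rather than the overall contraction argument, which is routine. Specifically, the projection term $\bar{\xibf}(k)$ does not simply vanish under centering, since $\Xbf(k+1) = \Vbf(k)-\Xibf(\Vbf(k))$ and $\xbarbf(k+1) = \vbarbf(k)-\bar{\xibf}(k)$; so the centered projection error is $\Mbf\Xibf(\Vbf(k))$, and I must bound $\|\Mbf\Xibf(\Vbf(k))\| \le \|\Xibf(\Vbf(k))\|$ and then invoke \eqref{lem_project:Ineq2}. Combining the $\sqrt{\sum_i\|\xibf_i\|^2}$ bound from Lemma~\ref{lem:projection} (which mixes $\|\Delta(t)\|$ and $\alpha(t)$ under a single square root) with the separate geometric sums requires using $\sqrt{a+b}\le\sqrt{a}+\sqrt{b}$ to split the contributions cleanly into the two sums appearing in the statement, and I expect the constant $6$ arises precisely from combining the factor $\sqrt{8}$ in \eqref{lem_project:Ineq2} with the $\|\Ibf-\Abf\|\le 2$ estimate on the direct quantization term.
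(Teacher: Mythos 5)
Your proposal is correct and follows essentially the same route as the paper's proof: center with $\Wbf=\Ibf-\tfrac{1}{n}\1\1^T$, contract via $\sigma_2$ on the orthogonal complement of $\1$, bound the quantization term by $\|\Ibf-\Abf\|\le 2$, the subgradient term by $L$, and the projection error through \eqref{lem_project:Ineq2} with $\sqrt{a+b}\le\sqrt{a}+\sqrt{b}$, then unroll the recursion. The only detail you leave implicit is that the initial term $\sigma_2^{k+1}\|\Xbf(0)-\1\xbarbf(0)^T\|$ vanishes because Algorithm~\ref{alg:QDSG} initializes all nodes at the same point, which is why the final bound consists of only the two geometric sums.
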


As mentioned, the main motivation of the adaptive quantization is to eliminate the impact of quantized errors. In particular, we will show that the quantized errors produced by Algorithm \ref{alg:QDSG} decrease to zero at the same rate with the step size $\alpha(k)$. To do that, we require the following technical condition. 
\begin{assump}\label{assump:bits}
Let $\gamma = 48(2+ L)\,/\,(1-\sigma_2)$. Then the number bits $b$ of the communication bandwdith satisfies 
\begin{align}
\sqrt{nd}\gamma \leq 2^b - 1.\label{assumption_bits:Ineq}
\end{align}
\end{assump}
The following lemma is to show that the quantized error $\|\Delta_i(k)\|\lesssim~ \alpha(k)$, for all $i\in\Vcal$, for the case $\sigma_{2}^{k}\leq \alpha(k)$ for all $k\geq0$. When $\sigma_{2}^k\geq \alpha(k)$ for some small $k$, e.g., $\sigma_2$ is closed to $1$ but not equal, one can show from our analysis that $\|\Delta_i(k)\|\lesssim~ \sigma_2^{k} $, which is eventually converge to zero faster than $\alpha(k)$. Since this issue has been captured by the rate of consensus in Eq.\ \eqref{lem_consensus:consensus_bound}, we skip it here for simplicity.

\begin{lemma}\label{lem:quantized_error}
Suppose that Assumption \ref{assump:doub_stoch} and \ref{assump:bits} hold. Let the sequence $\{\xbf_i(k)\}$, for all $i\in\Vcal$, be generated by Algorithm \ref{alg:QDSG}. Let $\alpha(k)$ be either $\alpha(k) = 1\,/\,\sqrt{k+1}$ or $\alpha(k) = 1\,/\,(k+1)$. Then we have all $k\geq 0$ 
\begin{align}
\xbf_i(k+1)\in \Rcal_i(k+1) \triangleq \left[\qbf_i(k) - \frac{\gamma}{2}\alpha(k)\1,\,\qbf_i(k) + \frac{\gamma}{2}\alpha(k)\1\right].\label{lem_quantized_error:Ineq} 
\end{align}
In addition, we also have
\begin{align}
\|\Delta_i(k)\| \leq \frac{\sqrt{d}\gamma}{2^b-1}\alpha(k).\label{lem_quantized_error:error_bound} 
\end{align}
\end{lemma}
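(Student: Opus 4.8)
The two assertions are linked, and the second is the easy consequence of the first. Once the membership \eqref{lem_quantized_error:Ineq} is established, $\qbf_i(k+1)$ is by construction the uniform quantization of $\xbf_i(k+1)$ over the box $\Rcal_i(k+1)$, whose side length in each coordinate is $\gamma\alpha(k)$ and which is split into $2^b-1$ equal bins. Applying the scalar estimate \eqref{notation:Q_error} coordinatewise gives a per-coordinate error of at most $\gamma\alpha(k)/(2^b-1)$, and summing over the $d$ coordinates yields the decay \eqref{lem_quantized_error:error_bound}. So I would put all the effort into proving the invariant \eqref{lem_quantized_error:Ineq} by induction on $k$.

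The plan is an induction whose hypothesis at stage $k$ is that $\xbf_i(t)\in\Rcal_i(t)$ — equivalently $\|\Delta_i(t)\|\le \tfrac{\sqrt d\,\gamma}{2^b-1}\,\alpha(t)$ — for every $i\in\Vcal$ and every $t\le k$. The base case $k=0$ uses the initialization $\xbf_i(0)=\qbf_i(0)$, so $\Delta_i(0)=0$, and then bounds $\xbf_i(1)-\qbf_i(0)$ using only the projection estimate \eqref{lem_project:Ineq1} and $\|\gbf_i\|\le L_i$. For the inductive step I would start from the reformulation \eqref{prelim:xi} and, using $\sum_{j\in\Ncal_i}a_{ij}=1$, write
\[
\xbf_i(k+1)-\qbf_i(k) = \Delta_i(k) + \sum_{j\in\Ncal_i}a_{ij}\bigl(\qbf_j(k)-\qbf_i(k)\bigr) - \alpha(k)\gbf_i(\xbf_i(k)) - \xibf_i(\vbf_i(k)).
\]
Because $\Rcal_i(k+1)$ is a box, membership is implied by an $\ell_\infty$ bound, and since $\|\cdot\|_\infty\le\|\cdot\|_2$ it suffices to bound the right-hand side in Euclidean norm by $\tfrac{\gamma}{2}\alpha(k)$.

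Each term is then handled by an estimate already in hand. The gradient term is at most $L_i\alpha(k)$, and $\|\xibf_i(\vbf_i(k))\|$ is controlled by Lemma \ref{lem:projection}. The quantization errors $\|\Delta_i(k)\|,\|\Delta_j(k)\|$ come from the induction hypothesis, and here Assumption \ref{assump:bits} is decisive: it forces $\tfrac{\sqrt{nd}\,\gamma}{2^b-1}\le 1$, so that $\|\Delta(t)\|\le\alpha(t)$ in Frobenius norm. The consensus cross term I would expand as $\qbf_j(k)-\qbf_i(k)=-\Delta_j(k)+(\xbf_j(k)-\xbarbf(k))-(\xbf_i(k)-\xbarbf(k))+\Delta_i(k)$, exposing the disagreement errors $\|\xbf_i(k)-\xbarbf(k)\|$, which are dominated by the Frobenius bound of Lemma \ref{lem:consensus}. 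Feeding $\|\Delta(t)\|\le\alpha(t)$ into \eqref{lem_consensus:consensus_bound} turns the consensus contribution into a geometrically weighted sum $\sum_{t}\sigma_2^{k-t}\alpha(t)$, and collecting all terms bounds the right-hand side by a universal constant times $\alpha(k)/(1-\sigma_2)$. The value $\gamma=48(2+L)/(1-\sigma_2)$ is then exactly calibrated so that this constant is at most $\gamma/2$, which closes the induction.

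The main obstacle is twofold. First, there is a genuine circularity: the consensus bound of Lemma \ref{lem:consensus} involves all past quantization errors $\|\Delta(t)\|$, which are the very quantities being controlled, so the induction must be organized so that at stage $k$ only the already-verified bounds for $t\le k$ are invoked. Second, one must show the geometric sum $\sum_{t=0}^{k}\sigma_2^{k-t}\alpha(t)$ is dominated by $C\,\alpha(k)/(1-\sigma_2)$; this is precisely where the regime $\sigma_2^{k}\le\alpha(k)$ noted just before the lemma is used. Splitting the sum near $t=k/2$, the block with $t$ close to $k$ contributes $O(\alpha(k)/(1-\sigma_2))$ since both prescribed step sizes vary slowly, while the tail is $O(\sigma_2^{k/2}k)$, absorbed into $O(\alpha(k))$ by the regime assumption. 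What remains is careful constant bookkeeping to verify that the accumulated factors stay below $\gamma/2$.
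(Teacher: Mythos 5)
Your proposal is correct and follows essentially the same route as the paper: an induction on $k$ whose hypothesis, combined with Assumption \ref{assump:bits}, yields $\|\Delta(t)\|\le\alpha(t)$ for $t\le k$, which is fed into the consensus bound of Lemma \ref{lem:consensus} and the projection bound of Lemma \ref{lem:projection}, with the resulting geometric sum $\sum_{t}\sigma_2^{k-t}\alpha(t)$ handled by splitting at $k/2$ and invoking $\sigma_2^{k}\le\alpha(k)$ exactly as the paper does (the paper merely arranges the triangle inequality through the projected weighted average $\wbf_i(k)$ and its mean rather than through the explicit $\xibf_i$ term, a cosmetic difference). The only nitpick is that combining the per-coordinate quantizer errors should be done in Euclidean norm (giving the factor $\sqrt{d}$) rather than by summation (which would give $d$).
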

The following lemma is a consequence of Lemmas \ref{lem:consensus} and \ref{lem:quantized_error}.

\begin{lemma}\label{lem:consensus_bound}
Suppose that Assumptions \ref{assump:doub_stoch} and \ref{assump:bits} hold. Let the sequence $\{\xbf_i(k)\}$, for all $i\in\Vcal$, be generated by Algorithm \ref{alg:QDSG}. Let $\alpha(k)$ be either $\alpha(k) = 1\,/\,\sqrt{k+1}$ or $\alpha(k) = 1\,/\,(k+1)$. Then, we have
\begin{align}
\lim_{k\rightarrow\infty} \xbf_i(k) = \lim_{k\rightarrow\infty} \xbf_j(k),\qquad \forall\, i,j\in\Vcal.\label{lem_consensus:asymp_conv}
\end{align}
In addition, if $\alpha(k)$ is also square-summable, i.e., 
\begin{align}
\sum_{k=0}^{\infty}\alpha^2(k) < \infty,\label{lem_consensus:square_summable}
\end{align}
then for all $k\geq0$ we have 
\begin{align}
\sum_{t=0}^{k}\alpha(t) \|\Xbf(t)-\1\xbarbf(t)^T\| \leq \left(\frac{6\sqrt{nd}\gamma+3L2^b}{(1-\sigma_2)(2^b-1)}\right)\sum_{t=0}^{k}\alpha^2(t)<\infty.\label{lem_consensus:finite_sum} 
\end{align}
\item If $\alpha(k) = 1\,/\,\sqrt{k+1}$ then we have for all $k\geq0,$
\begin{align}
\sum_{t=0}^{k}\alpha(t)\|\Xbf(t)-\1\xbarbf(t)^T\|\leq \left(\frac{6\sqrt{nd}\gamma+3L2^b}{(1-\sigma_2)(2^b-1)}\right)(\ln(k+1)+1). \label{lem_consensus:rate}
\end{align}
\end{lemma}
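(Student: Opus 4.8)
The plan is to feed the quantized-error bound from Lemma~\ref{lem:quantized_error} into the consensus bound of Lemma~\ref{lem:consensus}, producing a single estimate on $\|\Xbf(k)-\1\xbarbf(k)^T\|$ that involves only the stepsizes $\alpha$ and the mixing factor $\sigma_2$. Concretely, since $\|\Delta(t)\|^2=\sum_{i=1}^n\|\Delta_i(t)\|^2$, inequality~\eqref{lem_quantized_error:error_bound} gives $\|\Delta(t)\|\leq \frac{\sqrt{nd}\,\gamma}{2^b-1}\,\alpha(t)$. Substituting this into \eqref{lem_consensus:consensus_bound} (after shifting $k+1\mapsto k$) and collecting the two geometric convolutions yields
\begin{align}
\|\Xbf(k)-\1\xbarbf(k)^T\| \leq C'\sum_{s=0}^{k-1}\sigma_2^{\,k-1-s}\alpha(s), \qquad C' \triangleq \frac{6\sqrt{nd}\,\gamma}{2^b-1}+3L.\label{plan:master}
\end{align}
The bookkeeping here is routine; the only point to watch is that writing $C'$ over the common denominator $2^b-1$ and using $3L(2^b-1)\leq 3L\,2^b$ gives $C'\leq \frac{6\sqrt{nd}\,\gamma+3L\,2^b}{2^b-1}$, which is precisely the numerator appearing in \eqref{lem_consensus:finite_sum}--\eqref{lem_consensus:rate}.

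For the asymptotic consensus~\eqref{lem_consensus:asymp_conv} I would show that the right-hand side of \eqref{plan:master} tends to zero. Since $\alpha(s)\to 0$ and $\sigma_2\in(0,1)$, this is the standard fact that a geometric kernel convolved with a null sequence is null: given $\varepsilon>0$, split the sum at an index $S$ beyond which $\alpha(s)<\varepsilon$; the tail is at most $\varepsilon\sum_{j\geq0}\sigma_2^{\,j}=\varepsilon/(1-\sigma_2)$, while the head is a fixed finite sum whose weights $\sigma_2^{\,k-1-s}\to 0$ as $k\to\infty$. Hence $\|\Xbf(k)-\1\xbarbf(k)^T\|\to 0$, and writing $\xbf_i(k)-\xbf_j(k)=(\xbf_i(k)-\xbarbf(k))-(\xbf_j(k)-\xbarbf(k))$ shows that the nodes reach consensus.

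The core computation is the weighted sum~\eqref{lem_consensus:finite_sum}. Multiplying \eqref{plan:master} by $\alpha(t)$, summing over $t=0,\dots,k$, and exchanging the order of summation gives
\begin{align}
\sum_{t=0}^{k}\alpha(t)\|\Xbf(t)-\1\xbarbf(t)^T\| \leq C'\sum_{s=0}^{k-1}\alpha(s)\sum_{t=s+1}^{k}\sigma_2^{\,t-1-s}\alpha(t).\label{plan:swap}
\end{align}
This is the step I expect to be the crux: in the inner sum every index satisfies $t>s$, so the monotonicity of the stepsizes gives $\alpha(t)\leq\alpha(s)$, and the geometric factors sum to at most $\sum_{j\geq0}\sigma_2^{\,j}=1/(1-\sigma_2)$. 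Pulling out $\alpha(s)$ converts the cross terms into $\alpha(s)^2$ and leaves
\[
\sum_{t=0}^{k}\alpha(t)\|\Xbf(t)-\1\xbarbf(t)^T\| \leq \frac{C'}{1-\sigma_2}\sum_{s=0}^{k}\alpha^2(s) \leq \frac{6\sqrt{nd}\,\gamma+3L\,2^b}{(1-\sigma_2)(2^b-1)}\sum_{s=0}^{k}\alpha^2(s),
\]
which is exactly \eqref{lem_consensus:finite_sum}; square-summability of $\alpha$ then renders the bound finite. The delicate part is handling the two coupled sums so that exactly one factor $1/(1-\sigma_2)$ is extracted and the stepsize monotonicity is applied in the correct direction.

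Finally, \eqref{lem_consensus:rate} follows by specializing to $\alpha(k)=1/\sqrt{k+1}$, for which $\alpha^2(t)=1/(t+1)$ and hence $\sum_{t=0}^{k}\alpha^2(t)=\sum_{j=1}^{k+1}\frac{1}{j}\leq 1+\ln(k+1)$ by the usual integral bound on the harmonic sum. Inserting this into the previous display gives the claimed $\ln(k+1)+1$ rate with the same constant.
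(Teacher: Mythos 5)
Your proposal is correct and follows essentially the same route as the paper: substitute the quantization bound of Lemma~\ref{lem:quantized_error} into Lemma~\ref{lem:consensus}, deduce consensus from the vanishing of the geometric convolution of a null sequence, and obtain \eqref{lem_consensus:finite_sum} by swapping the order of summation, using stepsize monotonicity to turn the cross terms into squares, and extracting a single factor $1/(1-\sigma_2)$. The only cosmetic differences are that you phrase the null-convolution step as an $\varepsilon$-split rather than the paper's split at $\lfloor k/2\rfloor$, and you apply the monotonicity after rather than before exchanging the sums; both yield identical bounds.
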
 
Finally, we provide an upper bound for the optimal distance $\|\xbarbf(k)-\xbf^*\|^2$ in the following lemma. 

\begin{lemma}\label{lem:opt_dist}
Suppose that Assumptions \ref{assump:doub_stoch} and \ref{assump:bits} hold. Let the sequence $\{\xbf_i(k)\}$, for all $i\in\Vcal$, be generated by Algorithm \ref{alg:QDSG}. In addition, let $\xbf^*\in\Xcal^*$ be a solution of problem \eqref{prob:obj}. Let $\alpha(k)$ be either $\alpha(k) = 1\,/\,\sqrt{k+1}$ or $\alpha(k) = 1\,/\,(k+1)$. Then, we have
\begin{align}
\|\xbarbf(k+1)-\xbf^*\|^2&\leq \|\xbarbf(k)-\xbf^*\|^2-\frac{2\alpha(k)}{n}\sum_{i=1}^n\gbf_i(\xbf_i(k))^{T}(\xbf_i(k)-\xbf^*) \notag\\
&\qquad +\frac{2\left(4\sqrt{nd}\gamma+3L2^{b}\right)}{\sqrt{n}(2^b-1)}\alpha(k)\|\Xbf(k)-\1\xbarbf(k)^T\|\notag\\ 
&\qquad + \frac{2(4\sqrt{nd}\gamma + 3L2^{b})^2}{(2^b-1)^2\sqrt{n}}\alpha^2(k)\cdot\label{lem_opt_dist:Ineq}
\end{align}
\end{lemma}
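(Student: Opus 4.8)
The plan is to carry out a one-step analysis of the \emph{averaged} iterate $\xbarbf(k)$, imitating the textbook estimate for centralized projected subgradient descent while isolating the extra error coming from disagreement and quantization. Starting from the averaged recursion \eqref{prelim:xbar}, I would first write $\xbarbf(k+1)=\vbarbf(k)-\xibarbf(k)$ and expand
\[
\|\xbarbf(k+1)-\xbf^*\|^2=\|\vbarbf(k)-\xbf^*\|^2-2\,\xibarbf(k)^T(\vbarbf(k)-\xbf^*)+\|\xibarbf(k)\|^2,
\]
together with the further expansion, using $\vbarbf(k)=\xbarbf(k)-\tfrac{\alpha(k)}{n}\sum_i\gbf_i(\xbf_i(k))$,
\[
\|\vbarbf(k)-\xbf^*\|^2=\|\xbarbf(k)-\xbf^*\|^2-\frac{2\alpha(k)}{n}\sum_{i=1}^n(\xbarbf(k)-\xbf^*)^T\gbf_i(\xbf_i(k))+\alpha^2(k)\Big\|\tfrac1n\textstyle\sum_i\gbf_i(\xbf_i(k))\Big\|^2.
\]
This produces the leading term $\|\xbarbf(k)-\xbf^*\|^2$, a subgradient cross term, a projection cross term, and two quadratic remainders, which I would treat in turn.

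For the subgradient cross term I would substitute $\xbarbf(k)-\xbf^*=(\xbf_i(k)-\xbf^*)+(\xbarbf(k)-\xbf_i(k))$ inside the sum. The first piece reproduces exactly the descent term $-\tfrac{2\alpha(k)}{n}\sum_i\gbf_i(\xbf_i(k))^T(\xbf_i(k)-\xbf^*)$ appearing in \eqref{lem_opt_dist:Ineq}; the second piece is controlled by Cauchy--Schwarz and the uniform subgradient bound $\|\gbf_i(\cdot)\|\le L_i$ of Proposition~\ref{prop:bounded_subg}, followed by one more Cauchy--Schwarz over $i$ and $L_i\le L$, giving a contribution of order $\tfrac{\alpha(k)}{\sqrt n}L\,\|\Xbf(k)-\1\xbarbf(k)^T\|$, i.e.\ a consensus-error term.

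The main obstacle is the projection cross term $-2\,\xibarbf(k)^T(\vbarbf(k)-\xbf^*)$: a direct Cauchy--Schwarz would introduce the uncontrolled distance $\|\vbarbf(k)-\xbf^*\|$ and hence a diameter constant absent from \eqref{lem_opt_dist:Ineq}. To avoid this I would use the variational inequality of the projection: since $\xbf_i(k+1)=[\vbf_i(k)]_\Xcal$ and $\xbf^*\in\Xcal$, one has $\xibf_i(\vbf_i(k))^T(\xbf_i(k+1)-\xbf^*)\ge0$, so $\xibf_i(\vbf_i(k))^T(\vbf_i(k)-\xbf^*)\ge0$. Because $\1^T(\Ibf-\Abf)=\mathbf 0$, the per-node quantization corrections in \eqref{prelim:X} cancel in the mean and $\vbarbf(k)=\tfrac1n\sum_i\vbf_i(k)$; writing $\vbarbf(k)-\xbf^*=(\vbf_i(k)-\xbf^*)-(\vbf_i(k)-\vbarbf(k))$ then yields
\[
-2\,\xibarbf(k)^T(\vbarbf(k)-\xbf^*)=-\frac2n\sum_{i=1}^n\xibf_i(\vbf_i(k))^T(\vbf_i(k)-\xbf^*)+\frac2n\sum_{i=1}^n\xibf_i(\vbf_i(k))^T(\vbf_i(k)-\vbarbf(k)).
\]
The first sum is nonpositive and is discarded; the second is bounded by $\tfrac2n\sum_i\|\xibf_i(\vbf_i(k))\|\,\|\vbf_i(k)-\vbarbf(k)\|$. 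Here I would bound $\|\xibf_i(\vbf_i(k))\|\lesssim\alpha(k)$ via \eqref{lem_project:Ineq1} together with $\|\Delta_i(k)\|\le\tfrac{\sqrt d\,\gamma}{2^b-1}\alpha(k)$ from \eqref{lem_quantized_error:error_bound}, and expand $\vbf_i(k)-\vbarbf(k)=\sum_j a_{ij}(\xbf_j(k)-\xbarbf(k))+\Delta_i(k)-\sum_j a_{ij}\Delta_j(k)-\alpha(k)\big(\gbf_i(\xbf_i(k))-\tfrac1n\sum_l\gbf_l(\xbf_l(k))\big)$, i.e.\ a consensus part plus an $\mathcal O(\alpha(k))$ part. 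Double stochasticity of $\Abf$ with a Cauchy--Schwarz over $i$ turns the consensus part into a further $\tfrac{1}{\sqrt n}\alpha(k)\|\Xbf(k)-\1\xbarbf(k)^T\|$ contribution, while the $\mathcal O(\alpha(k))$ part contributes at order $\alpha^2(k)$.

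Finally I would bound the two quadratic remainders: $\|\tfrac1n\sum_i\gbf_i(\xbf_i(k))\|\le L/n$ and $\|\xibarbf(k)\|\le\tfrac1n\sum_i\|\xibf_i(\vbf_i(k))\|\lesssim\alpha(k)$, so both are $\mathcal O(\alpha^2(k))$. Collecting all consensus-times-$\alpha(k)$ contributions and all $\alpha^2(k)$ contributions, and using the uniform quantization bound $\|\Delta(k)\|\le\tfrac{\sqrt{nd}\,\gamma}{2^b-1}\alpha(k)$ together with $2^b-1\le2^b$ to merge the subgradient ($L$) and quantization ($\gamma$) constants over the common denominator $2^b-1$, gives the stated coefficients $\tfrac{2(4\sqrt{nd}\gamma+3L2^b)}{\sqrt n(2^b-1)}$ and $\tfrac{2(4\sqrt{nd}\gamma+3L2^b)^2}{\sqrt n(2^b-1)^2}$ in \eqref{lem_opt_dist:Ineq}. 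The two delicate points I expect are the projection cross term, where the variational inequality is essential to avoid a diameter term, and the constant bookkeeping needed to land on exactly these coefficients.
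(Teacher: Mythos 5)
Your proposal is correct and follows essentially the same route as the paper: expand the averaged recursion \eqref{prelim:xbar}, use the variational inequality of the projection to discard the $\xibf_i(\vbf_i(k))^T(\vbf_i(k)-\xbf^*)$ terms and avoid a diameter constant, split the subgradient cross term into the descent term plus an $L\|\Xbf(k)-\1\xbarbf(k)^T\|$ consensus term, and absorb everything else via \eqref{lem_project:Ineq2} and \eqref{lem_quantized_error:error_bound}. The only differences are cosmetic (you decompose the projection cross term around $\vbarbf(k)$ where the paper uses $\xbarbf(k)$, and you group the quadratic remainders slightly differently), so no further comment is needed.
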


\subsection{Convergence Results of Convex Functions}
We now present the first main result of this paper, which is the rate of convergence of Algorithm \ref{alg:QDSG} to the optimal value of problem \eqref{prob:obj} when the local functions $f_i$ are convex. Since the update of $\xbarbf(k)$ in Eq.\ \eqref{prelim:xbar} can be viewed as a variant of a centralized projected subgradient methods used to solve problem \eqref{prob:obj}, we utilize  standard techniques in the analysis of these methods to derive the rate of convergence of Algorithm \ref{alg:QDSG}. Specifically, at any time $k \geq 0$ if each node $i \in \Vcal$ maintains a variable $\zbf_i(k)$ to compute
the time-weighted average of its estimate $\xbf_i(k)$ and if the stepsize $\alpha(k)$ decays as $\alpha(k) = 1/ \,\sqrt{k+1}$, the objective function value $f$ in Eq.\ \eqref{prob:obj} estimated at each $\zbf_i(k)$ converges to the optimal value with a rate $\mathcal{O}\left(\eta \ln(k+1)\, /\,\sqrt{k+1}\right)$, where $\eta$ is some constant depending on the algebraic connectivity $1-\sigma_2$ of the network, the number of quantized bits $b$, and the Lipschitz constants $L_i$ of $f_i$. We also note that this condition on the stepsizes is also used to study the convergence rate of centralized subgradient methods \cite{Nesterov2004}. The following theorem is used to show the convergence rate of Algorithm \ref{alg:QDSG}. 
 
\begin{theorem}\label{thm_convex:rate}
Suppose that Assumptions \ref{assump:doub_stoch} and \ref{assump:bits} hold. Let the sequence $\{\xbf_i(k)\}$, for all $i\in\Vcal$, be generated by Algorithm \ref{alg:QDSG}. In addition, let $\alpha(k) = 1\,/\,\sqrt{k+1}$. Moreover, suppose that each node $i$, for all $i\in\Vcal$, stores a variable $\zbf_i\in\Rset^{d}$ initiated arbitrarily in $\Xcal$ and updated as 
\begin{align}
\zbf_i(k) = \frac{\sum_{t=0}^k\alpha(t)\xbf_i(t)}{\sum_{t=0}^k\alpha(t)},\quad\forall i\in\Vcal.\label{thm_convex_rate:zi}
\end{align} 
Then for all $i\in\Vcal$ and $k\geq 0$ we have
\begin{align}
f(\zbf_{i}(k)) - f^*&\leq\frac{n \|\xbarbf(0)-\xbf^*\|^2}{2\sqrt{k+1}}+ \frac{\sqrt{n}(6\sqrt{nd}\gamma + 5L2^{b})^2}{(1-\sigma_2)(2^b-1)^2}\frac{(\ln(k+1)+1)}{\sqrt{k+1}}\cdot\label{thm_convex_rate:Ineq}
\end{align}
\end{theorem}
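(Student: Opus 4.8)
The plan is to treat the update of the network average $\xbarbf(k)$ in Eq.~\eqref{prelim:xbar} as a (perturbed) centralized projected subgradient iteration and to run the classical weighted-averaging argument on top of it, using the preliminary lemmas to absorb all the quantization- and consensus-induced error. The starting point is the one-step bound on the optimality gap $\|\xbarbf(k)-\xbf^*\|^2$ in Lemma~\ref{lem:opt_dist}, whose only useful negative term is $-\frac{2\alpha(k)}{n}\sum_{i}\gbf_i(\xbf_i(k))^T(\xbf_i(k)-\xbf^*)$; everything else is a nonnegative error that I will eventually show is summable after weighting by $\alpha(k)$.

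First I would turn this inner-product term into a function-value gap. Convexity of each $f_i$ gives $\gbf_i(\xbf_i(k))^T(\xbf_i(k)-\xbf^*)\geq f_i(\xbf_i(k))-f_i(\xbf^*)$, and the $L_i$-Lipschitz property (Proposition~\ref{prop:bounded_subg}) lets me replace the node iterate $\xbf_i(k)$ by the average $\xbarbf(k)$ at the cost of a consensus term, namely $f_i(\xbf_i(k))\geq f_i(\xbarbf(k))-L_i\|\xbf_i(k)-\xbarbf(k)\|$. Summing over $i$ and bounding $\sum_i L_i\|\xbf_i(k)-\xbarbf(k)\|\leq L\|\Xbf(k)-\1\xbarbf(k)^T\|$ (with $L=\sum_i L_i$, using that the maximum is dominated by the Frobenius norm) converts the descent term into $f(\xbarbf(k))-f^*$ plus a multiple of $\alpha(k)\|\Xbf(k)-\1\xbarbf(k)^T\|$. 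Substituting back into Lemma~\ref{lem:opt_dist} and rearranging yields a telescoping inequality of the form
\[
\frac{2\alpha(k)}{n}\bigl(f(\xbarbf(k))-f^*\bigr)\leq \|\xbarbf(k)-\xbf^*\|^2-\|\xbarbf(k+1)-\xbf^*\|^2 + c_1\alpha(k)\|\Xbf(k)-\1\xbarbf(k)^T\| + c_2\alpha^2(k),
\]
where $c_1,c_2$ are explicit constants in $\sqrt{nd}\gamma$, $L2^b$ and $(2^b-1)$ coming from the two error terms of Lemma~\ref{lem:opt_dist} together with the Lipschitz term above.

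Next I would sum this inequality from $t=0$ to $k$. The optimality-gap terms telescope, leaving $\|\xbarbf(0)-\xbf^*\|^2$ and discarding the nonnegative $\|\xbarbf(k+1)-\xbf^*\|^2$. The weighted consensus sum $\sum_{t}\alpha(t)\|\Xbf(t)-\1\xbarbf(t)^T\|$ is controlled by Eq.~\eqref{lem_consensus:rate} of Lemma~\ref{lem:consensus_bound}, which grows only like $\ln(k+1)+1$, and $\sum_{t=0}^k\alpha^2(t)=\sum_{t=0}^k 1/(t+1)\leq \ln(k+1)+1$ by the standard harmonic bound. Dividing through by $\tfrac{2}{n}S_k$ with $S_k=\sum_{t=0}^k\alpha(t)\geq\sqrt{k+1}$, and invoking Jensen's inequality through convexity of $f$, gives a bound on $f(\zbarbf(k))-f^*$, where $\zbarbf(k)=\tfrac{1}{S_k}\sum_t\alpha(t)\xbarbf(t)=\tfrac1n\sum_i\zbf_i(k)$ is exactly the network average of the node outputs in Eq.~\eqref{thm_convex_rate:zi}.

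The last step is to pass from the network-average output $\zbarbf(k)$ to the individual node output $\zbf_i(k)$. Using $L$-Lipschitz continuity of $f$ once more, $f(\zbf_i(k))-f(\zbarbf(k))\leq L\|\zbf_i(k)-\zbarbf(k)\|\leq \frac{L}{S_k}\sum_t\alpha(t)\|\Xbf(t)-\1\xbarbf(t)^T\|$, and this residual is again summable by Lemma~\ref{lem:consensus_bound}, contributing one more $\ln(k+1)+1$ term of the same order. Collecting the $\|\xbarbf(0)-\xbf^*\|^2/(2\sqrt{k+1})$ term and bundling every error constant into the single factor $(6\sqrt{nd}\gamma+5L2^b)^2/[(1-\sigma_2)(2^b-1)^2]$ then yields Eq.~\eqref{thm_convex_rate:Ineq}. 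I expect the genuine mathematical content to be concentrated in the convexity/Lipschitz conversion that introduces the consensus terms; the main obstacle is ensuring those consensus terms stay summable with only a logarithmic factor, which is precisely where the adaptive-quantization bound $\|\Delta_i(k)\|\lesssim\alpha(k)$ (Lemma~\ref{lem:quantized_error}) enters through Lemma~\ref{lem:consensus_bound}. The remaining work, reconciling the intermediate constants $c_1,c_2$ with the stated $(6\sqrt{nd}\gamma+5L2^b)^2$, is routine bookkeeping.
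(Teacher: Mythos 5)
Your proposal is correct and follows essentially the same route as the paper: the one-step bound of Lemma~\ref{lem:opt_dist}, the convexity/Lipschitz conversion of the inner-product term at the cost of $L\|\Xbf(k)-\1\xbarbf(k)^T\|$, telescoping, Lemma~\ref{lem:consensus_bound} plus the harmonic bound for the error sums, and Jensen through the weighted average. The only (cosmetic) difference is where you pass from the average to an individual node: the paper adds and subtracts $f(\xbf_{\ell}(k))$ inside the per-iteration inequality (Eq.~\eqref{thm_convex_rate:Eq1a}) so that Jensen lands directly on $f(\zbf_{\ell}(k))$, whereas you keep $f(\xbarbf(k))$ throughout and apply Lipschitz continuity to $\|\zbf_i(k)-\zbarbf(k)\|$ at the very end --- both incur the same extra consensus term and yield the same order of constants.
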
 
\begin{proof}
For convenience, let $\rbf(k) = \xbarbf(k)-\xbf^*$, where $\xbf^*\in\Xcal^*$ is a solution of problem \eqref{prob:obj}. By Eq.\ \eqref{lem_opt_dist:Ineq} we have
\begin{align*}
\|\rbf(k+1)\|^2 &\leq \|\rbf(k)\|^2-\frac{2\alpha(k)}{n}\sum_{i=1}^n\gbf_i(\xbf_i(k))^{T}(\xbf_i(k)-\xbf^*) + \frac{2(4\sqrt{nd}\gamma + 3L2^{b})^2}{(2^b-1)^2\sqrt{n}}\alpha^2(k) \notag\\
&\qquad +\frac{2\left(4\sqrt{nd}\gamma+3L2^{b}\right)}{\sqrt{n}(2^b-1)}\alpha(k)\|\Xbf(k)-\1\xbarbf(k)^T\|,
\end{align*}
which by the convexity of $f_i$ yields
\begin{align}
\|\rbf(k+1)\|^2&\leq \|\rbf(k)\|^2 + \frac{2(4\sqrt{nd}\gamma + 3L2^{b})^2}{(2^b-1)^2\sqrt{n}}\alpha^2(k) \notag\\
&\qquad +\frac{2\left(4\sqrt{nd}\gamma+3L2^{b}\right)}{\sqrt{n}(2^b-1)}\alpha(k)\|\Xbf(k)-\1\xbarbf(k)^T\|\notag\\ 
&\qquad - \frac{2\alpha(k)}{n}\left(\sum_{i=1}f_i(\xbf_i(k))-f_i(\xbf^*))\right)
,\label{thm_convex_rate:Eq1}
\end{align}
We now analyze the last term on the right-hand side of Eq.\ \eqref{thm_convex_rate:Eq1}. Indeed, by Eq.\ \eqref{prop:bounded_subg_ineq} and using $f = \sum_{i=1}^n f_i$ and $f^* = f(x^*)$, we have for a fixed $\ell\in\Vcal$
\begin{align}
&- \sum_{i=1}^n f_i(\xbf_i(k)) - f_i(\xbf^*) = - \sum_{i=1}^n\Big( f_i(\xbf_i(k)) -f_i(\xbarbf(k)) + f_i(\xbarbf(k)) - f_i(\xbf^*)\Big)\notag\\
&\qquad\leq \sum_{i=1}^n L_i |\,\xbf_i(k)-\xbarbf(k)\,| - \Big(f(\xbarbf(k)) - f^*\Big)\notag\\
&\qquad \leq L \|\,\Xbf(k)-\1\xbarbf(k)^T\,\| - \Big(f(\xbarbf(k)) -f(\xbf_{\ell}(k)) +f(\xbf_{\ell}(k)) - f^*\Big)\notag\\
&\qquad \leq 2L\|\,\Xbf(k)-\1\xbarbf(k)^T\,\| - \Big(f(\xbf_{\ell}(k)) - f^*\Big),
\label{thm_convex_rate:Eq1a}
\end{align}
which when substituting into Eq.\ \eqref{thm_convex_rate:Eq1} yields
\begin{align}
\|\rbf(k+1)\|^2 &\leq \|\rbf(k)\|^2 + \frac{2(4\sqrt{nd}\gamma + 3L2^{b})^2}{(2^b-1)^2\sqrt{n}}\alpha^2(k) - \frac{2}{n}\alpha(k)\Big(f(\xbf_{\ell}(k)) - f^*\Big) \notag\\
&\qquad +\left(\frac{2\left(4\sqrt{nd}\gamma+3L2^{b}\right)}{\sqrt{n}(2^b-1)})+ \frac{4L}{n}\right)\alpha(k)\|\Xbf(k)-\1\xbarbf(k)^T\|\notag\\ 
&= \|\rbf(k)\|^2 + \frac{2(4\sqrt{nd}\gamma + 3L2^{b})^2}{(2^b-1)^2\sqrt{n}}\alpha^2(k)  - \frac{2}{n}\alpha(k)\Big(f(\xbf_{\ell}(k)) - f^*\Big)\notag\\
&\qquad +\frac{2\left(4\sqrt{nd}\gamma+5L2^{b}\right)}{\sqrt{n}(2^b-1)}\alpha(k)\|\Xbf(k)-\1\xbarbf(k)^T\|\label{thm_convex_rate:Eq1b},
\end{align}
which when iteratively updating over $k = 0,\ldots,K$ for some $K\geq0$ we have
\begin{align*}
\|\rbf(K+1)\|^2 &\leq \|\rbf(0)\|^2 + \frac{2(4\sqrt{nd}\gamma + 3L2^{b})^2}{(2^b-1)^2\sqrt{n}}\sum_{k=0}^{K}\alpha^2(k) \notag\\
&\qquad +\frac{2\left(4\sqrt{nd}\gamma+5L2^{b}\right)}{\sqrt{n}(2^b-1)}\sum_{k=0}^{K}\alpha(k)\|\Xbf(k)-\1\xbarbf(k)^T\|\notag\\ 
&\qquad - \frac{2}{n}\sum_{k=0}^{K}\alpha(k)\Big(f(\xbf_{\ell}(k)) - f^*\Big).
\end{align*}
Since $\alpha(k) = 1/\sqrt{k+1}$ we now use Eq.\ \eqref{lem_consensus:rate} into the preceding relation to have
\begin{align*}
\|\rbf(K+1)\|^2 & \leq \|\rbf(0)\|^2 + \frac{2(4\sqrt{nd}\gamma + 3L2^{b})^2}{(2^b-1)^2\sqrt{n}}(\ln(K+1)+1) \notag\\
&\qquad +\frac{2\left(4\sqrt{nd}\gamma+5L2^{b}\right)}{\sqrt{n}(2^b-1)}\left(\frac{6\sqrt{nd}\gamma+3L2^b}{(1-\sigma_2)(2^b-1)}\right)(\ln(K+1)+1)\notag\\
&\qquad - \frac{2}{n}\sum_{k=0}^{K}\alpha(k)\Big(f(\xbf_{\ell}(k)) - f^*\Big)\notag\\
&\leq \|\rbf(0)\|^2 + \frac{4(6\sqrt{nd}\gamma + 5L2^{b})^2}{(1-\sigma_2)(2^b-1)^2\sqrt{n}}(\ln(K+1)+1) \notag\\
&\qquad - \frac{2}{n}\sum_{k=0}^{K}\alpha(k)\Big(f(\xbf_{\ell}(k)) - f^*\Big).
\end{align*}
Rearranging the preceding relation and dropping the nonnegative $\|\rbf(K+1)\|$ we obtain
\begin{align*}
\sum_{k=0}^{K}\alpha(k)\Big(f(\xbf_{\ell}(k)) - f^*\Big)&\leq \frac{n \|\rbf(0)\|^2}{2}+ \frac{2\sqrt{n}(6\sqrt{nd}\gamma + 5L2^{b})^2}{(1-\sigma_2)(2^b-1)^2}(\ln(K+1)+1),
\end{align*}
which by dividing both sides by $\sum_{k=0}^{K}\alpha(k)$ and using the convexity of $f$ gives Eq.\  \eqref{thm_convex_rate:Ineq}, i.e.,
\begin{align*}
f(\zbf_{\ell}(K)) - f^*&\leq \frac{n \|\rbf(0)\|^2}{2\sqrt{K+1}}+ \frac{2\sqrt{n}(6\sqrt{nd}\gamma + 5L2^{b})^2}{(1-\sigma_2)(2^b-1)^2}\frac{(\ln(K+1)+1)}{\sqrt{K+1}},
\end{align*}
where in the last inequality we use the integral test for $K\geq 0$ to have
\begin{align*}
\sum_{k=0}^{K}\alpha(k) = \sum_{k=0}^{K}\frac{1}{\sqrt{k+1}}\geq \int_{t=0}^{K+1}\frac{1}{\sqrt{t+1}}dt = 2(\sqrt{K+2}-1)\geq \sqrt{K+1}.
\end{align*}
\end{proof}
It is worth to mention that under the choice of $\alpha(k) = 1\,/\,(k+1)$, for all $k\geq 0$, one can show that $\xbf_i(k)$ asymptotically converges to $\xbf^*$ for all $i\in\Vcal$. This is a consequence of Lemmas \ref{lem:consensus} and \ref{lem:opt_dist}, and some standard analysis. The following lemma states this result. The analysis is omitted and can be found in \cite[Theorem 3]{DoanMR2018a}.  
\begin{lemma}
Suppose that Assumptions \ref{assump:doub_stoch} and \ref{assump:bits} hold. Let the sequence $\{\xbf_i(k)\}$, for all $i\in\Vcal$, be generated by Algorithm \ref{alg:QDSG}. Let $\alpha(k) = 1\,/\,(k+1)$.  Then we obtain
\begin{align}
\lim_{k\rightarrow\infty} \xbf_i(k) = \xbf^*,\qquad \text{for all } i\in\Vcal,
\end{align}
for some  $\xbf^*$ that is a solution of problem \eqref{prob:obj}.
\end{lemma}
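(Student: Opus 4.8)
The plan is to reduce the claim to showing that the running average $\xbarbf(k)$ converges to a single solution, and then invoke consensus. Since $\alpha(k)=1/(k+1)$ is square-summable, Lemma~\ref{lem:consensus_bound} already supplies asymptotic consensus \eqref{lem_consensus:asymp_conv} together with the finite-sum bound \eqref{lem_consensus:finite_sum}, so it suffices to prove $\xbarbf(k)\to\xbf^*$ for some $\xbf^*\in\Xcal^*$. Starting from the optimal-distance recursion \eqref{lem_opt_dist:Ineq} in Lemma~\ref{lem:opt_dist}, I would lower-bound the cross term exactly as in \eqref{thm_convex_rate:Eq1a}: the subgradient inequality gives $\sum_i\gbf_i(\xbf_i(k))^T(\xbf_i(k)-\xbf^*)\geq\sum_i(f_i(\xbf_i(k))-f_i(\xbf^*))$, and splitting through $\xbarbf(k)$ together with $\sum_i L_i\|\xbf_i(k)-\xbarbf(k)\|\leq L\|\Xbf(k)-\1\xbarbf(k)^T\|$ yields $\sum_i\gbf_i(\xbf_i(k))^T(\xbf_i(k)-\xbf^*)\geq(f(\xbarbf(k))-f^*)-L\|\Xbf(k)-\1\xbarbf(k)^T\|$. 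Writing $\rbf(k)=\xbarbf(k)-\xbf^*$, this produces a recursion of the form
\[
\|\rbf(k+1)\|^2\leq\|\rbf(k)\|^2-\tfrac{2\alpha(k)}{n}\bigl(f(\xbarbf(k))-f^*\bigr)+C_1\,\alpha(k)\|\Xbf(k)-\1\xbarbf(k)^T\|+C_2\,\alpha^2(k),
\]
where $C_1,C_2$ are the explicit constants of \eqref{lem_opt_dist:Ineq}, with $C_1$ augmented by the $2L/n$ term from the splitting.

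The second step is to recognize this as an instance $v_{k+1}\leq v_k-u_k+w_k$ of the deterministic (Robbins--Siegmund-type) convergence lemma, with $v_k=\|\rbf(k)\|^2$, $u_k=\tfrac{2\alpha(k)}{n}(f(\xbarbf(k))-f^*)\geq 0$, and $w_k=C_1\alpha(k)\|\Xbf(k)-\1\xbarbf(k)^T\|+C_2\alpha^2(k)$. Summability $\sum_k w_k<\infty$ holds because $\sum_k\alpha^2(k)=\sum_k 1/(k+1)^2<\infty$ and because $\sum_k\alpha(k)\|\Xbf(k)-\1\xbarbf(k)^T\|<\infty$ by \eqref{lem_consensus:finite_sum}. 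The lemma then delivers two conclusions: $\{\|\rbf(k)\|^2\}$ converges to a finite limit, and $\sum_k\alpha(k)(f(\xbarbf(k))-f^*)<\infty$.

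The third step extracts a limit point. Because $\sum_k\alpha(k)=\sum_k 1/(k+1)=\infty$ while the nonnegative series $\sum_k\alpha(k)(f(\xbarbf(k))-f^*)$ is finite, we must have $\liminf_{k\to\infty}(f(\xbarbf(k))-f^*)=0$, so some subsequence satisfies $f(\xbarbf(k_m))\to f^*$. Each $\xbarbf(k)$ lies in the compact convex set $\Xcal$ (being an average of projections onto $\Xcal$), so a further subsequence converges to some $\tilde{\xbf}\in\Xcal$, and continuity of $f$ gives $f(\tilde{\xbf})=f^*$, i.e.\ $\tilde{\xbf}\in\Xcal^*$.

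The main obstacle, and the final step, is upgrading this subsequential convergence to convergence of the entire sequence, which is precisely where the possible non-uniqueness of $\Xcal^*$ matters. Here I would use that the recursion of the first step is valid for \emph{every} $\xbf^*\in\Xcal^*$, in particular for the limit point $\tilde{\xbf}$. Applying the convergence lemma with $\xbf^*=\tilde{\xbf}$ shows that $\{\|\xbarbf(k)-\tilde{\xbf}\|^2\}$ converges to a finite limit; along the further subsequence this quantity tends to $0$, so the already-established full limit must itself be $0$, whence $\xbarbf(k)\to\tilde{\xbf}$. Finally, asymptotic consensus \eqref{lem_consensus:asymp_conv} — equivalently $\|\Xbf(k)-\1\xbarbf(k)^T\|\to 0$, which follows from Lemma~\ref{lem:consensus} combined with the quantized-error decay of Lemma~\ref{lem:quantized_error} — gives $\|\xbf_i(k)-\xbarbf(k)\|\to 0$ for each $i$, and combining the two limits yields $\xbf_i(k)\to\tilde{\xbf}=:\xbf^*$ for all $i\in\Vcal$, completing the argument.
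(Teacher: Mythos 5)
Your argument is correct and complete. Note that the paper itself gives no proof of this lemma --- it simply defers to \cite[Theorem 3]{DoanMR2018a} --- so there is nothing internal to compare against; but your route (the quasi-Fej\'er/Robbins--Siegmund recursion for $\|\xbarbf(k)-\xbf^*\|^2$ over an arbitrary $\xbf^*\in\Xcal^*$, extraction of a subsequential limit in $\Xcal^*$ via $\sum_k\alpha(k)=\infty$ and compactness, re-running the recursion at that limit point to upgrade to full convergence, and finishing with consensus) is exactly the standard argument such references use. You correctly handle the two points where care is needed: the possible non-uniqueness of $\Xcal^*$, and the fact that the usable form of consensus is $\|\Xbf(k)-\1\xbarbf(k)^T\|\to 0$ from Lemmas~\ref{lem:consensus} and~\ref{lem:quantized_error} rather than the literal statement of \eqref{lem_consensus:asymp_conv}. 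The only quibble is terminological: $\xbarbf(k)$ is the average across nodes, not a ``running average'' in time.
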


\subsection{Convergence Results of Strongly Convex Case}
In this section, our goal is to study the convergence rate of Algorithm \ref{alg:QDSG} when the local functions $f_i$ are strongly convex, that is, we make the following assumption on $f_i$ 
\begin{assump}\label{assump:sconvexity}
Each function $f_i$ is strongly convex with some positive constant $\mu_i$, i.e., the condition \eqref{notation:sc} holds. 
\end{assump}
Under this assumption, we show that if each node $i \in \Vcal$ maintains a variable $\zbf_i(k)$ to compute the time average of its estimate $\xbf_i(k)$ and if the stepsize $\alpha(k)$ decays as $\alpha(k) = a\,/\, (k+1)$ for some properly chosen constant $a$, the variable $\zbf_i(k)$ converges to the optimal solution $x^*$ of problem \eqref{prob:obj} with a rate $\mathcal{O}\left(\eta \ln(k+1)\, /\,(k+1)\right)$, where $\eta$ is some constant depending on the algebraic connectivity $1-\sigma_2$ of the network, the number of quantized bits $b$, and the constants $L_i$ and $\mu_i$ of $f_i$. The following theorem is used to show the convergence rate of Algorithm 1 under Assumption \ref{assump:sconvexity}. 
\begin{theorem}\label{thm_sconvex:rate}
Suppose that Assumptions \ref{assump:doub_stoch} and \ref{assump:sconvexity} hold. Let the sequence $\{\xbf_i(k)\}$, for all $i\in\Vcal$, be generated by Algorithm \ref{alg:QDSG}. We denote by $\mu=\min_{i\in\Vcal}\mu_i$. In addition, let $\{\alpha(k)\} = a\,/\,k+1$ for some $a\geq 1\,/\,\mu$. Moreover, suppose that each node $i$, for all $i\in\Vcal$, stores a variable $\zbf_i\in\Rset$ initiated arbitrarily in $\Xcal$ and updated as 
\begin{align}
\zbf_i(k) = \frac{\sum_{t=0}^k \xbf_i(t)}{k+1},\quad\forall i\in\Vcal.\label{thm_sconvex_rate:zi}
\end{align} 
Let $\xbf^*\in\Xcal^*$ be a solution of problem \eqref{prob:obj}. Then for all $i\in\Vcal$ and $k\geq 0$ we have
\begin{align}
\|\zbf_{i}(k)-\xbf^*\|^2 \leq \frac{4\sqrt{n}\alpha(0)(6\sqrt{nd}\gamma + 5L2^{b})^2}{(1-\sigma_2)(2^b-1)^2}\frac{1+\ln(k+1)}{k+1}\cdot\label{thm_sconvex_rate:Ineq}
\end{align}
\end{theorem}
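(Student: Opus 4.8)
The plan is to mirror the convex-case argument of Theorem~\ref{thm_convex:rate}, but to exploit the strong convexity in Assumption~\ref{assump:sconvexity} to upgrade the $\mathcal{O}(\ln k/\sqrt{k})$ rate to $\mathcal{O}(\ln k/k)$. I would again start from the optimal-distance recursion \eqref{lem_opt_dist:Ineq} of Lemma~\ref{lem:opt_dist}, writing $\rbf(k)=\xbarbf(k)-\xbf^*$. The only change at the outset is to bound the subgradient inner product $\tfrac{2\alpha(k)}{n}\sum_{i}\gbf_i(\xbf_i(k))^T(\xbf_i(k)-\xbf^*)$ using the strong convexity inequality \eqref{notation:sc} rather than plain convexity. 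This produces two useful terms at once: the objective gap $-\tfrac{2\alpha(k)}{n}\big(f(\xbf_\ell(k))-f^*\big)$ for an arbitrary fixed node $\ell$ (extracted exactly as in \eqref{thm_convex_rate:Eq1a}, at the cost of a $2L\|\Xbf(k)-\1\xbarbf(k)^T\|$ consensus correction), together with an extra quadratic term $-\mu\alpha(k)\|\rbf(k)\|^2$ coming from $\tfrac1n\sum_i\|\xbf_i(k)-\xbf^*\|^2\ge\|\rbf(k)\|^2$ and $\mu=\min_i\mu_i$. The recursion then takes the contractive form
\[
\|\rbf(k+1)\|^2 \le (1-\mu\alpha(k))\|\rbf(k)\|^2 - \tfrac{2\alpha(k)}{n}\big(f(\xbf_\ell(k))-f^*\big) + E(k),
\]
where $E(k)$ collects the quantization/consensus error terms already assembled in \eqref{thm_convex_rate:Eq1b}.

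The decisive step is the telescoping made possible by the contraction factor. I would isolate $f(\xbf_\ell(k))-f^*$, divide through by $\alpha(k)$, and insert $\alpha(k)=a/(k+1)$ with $a\mu\ge 1$. Because $\tfrac{1}{\alpha(k)}=\tfrac{k+1}{a}$, the coefficient of $\|\rbf(k)\|^2$ becomes $\tfrac{n}{2a}(k+1-\mu a)\le\tfrac{n}{2a}\,k$ while that of $\|\rbf(k+1)\|^2$ is $\tfrac{n}{2a}(k+1)$, so the two assemble into the telescoping pair $\tfrac{n}{2a}\big[k\|\rbf(k)\|^2-(k+1)\|\rbf(k+1)\|^2\big]$. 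Summing over $k=0,\dots,K$ collapses this to $-\tfrac{n}{2a}(K+1)\|\rbf(K+1)\|^2\le 0$, which I would discard; note that this is precisely why no initial-distance term survives in \eqref{thm_sconvex_rate:Ineq}. The surviving right-hand side is a constant multiple of $\sum_{k=0}^K\|\Xbf(k)-\1\xbarbf(k)^T\|$ and $\sum_{k=0}^K\alpha(k)$, each of which is $\mathcal{O}(\ln(K+1))$ — the former by the geometric-sum manipulation behind Lemma~\ref{lem:consensus_bound} together with the quantized-error bound of Lemma~\ref{lem:quantized_error}, the latter since $\sum_{k=0}^K 1/(k+1)\le 1+\ln(K+1)$.

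To finish, I would divide $\sum_{k=0}^K\big(f(\xbf_\ell(k))-f^*\big)$ by $K+1$ and use convexity of $f$ with the uniform average \eqref{thm_sconvex_rate:zi} to obtain $f(\zbf_\ell(K))-f^*\lesssim\text{const}\cdot\ln(K+1)/(K+1)$, and then convert this objective gap into the squared distance \eqref{thm_sconvex_rate:Ineq} via strong convexity at the optimum, $\tfrac{\mu_f}{2}\|\zbf_\ell(K)-\xbf^*\|^2\le f(\zbf_\ell(K))-f^*$, which is valid because $\zbf_\ell(K)\in\Xcal$ is a convex combination of feasible iterates and $\xbf^*$ minimizes $f$ over $\Xcal$. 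Tracking the constants (with the $\mu$-dependence absorbed through $a=\alpha(0)\ge 1/\mu$) then produces the stated factor $\sqrt{n}\,\alpha(0)(6\sqrt{nd}\gamma+5L2^b)^2/[(1-\sigma_2)(2^b-1)^2]$.

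I expect the telescoping alignment to be the main obstacle and the true source of the faster rate: the improvement from $1/\sqrt{k}$ to $1/k$ rests entirely on the contraction $(1-\mu\alpha(k))$ shifting the weight on $\|\rbf(k)\|^2$ from $k+1$ down to $k$, which in turn forces the exact choice $\alpha(k)=a/(k+1)$ with $a\ge 1/\mu$. The tempting alternative — keeping the per-node term $\tfrac{\mu\alpha(k)}{n}\sum_i\|\xbf_i(k)-\xbf^*\|^2$ and summing it directly — fails to telescope cleanly and reintroduces $\sum_k\|\rbf(k)\|^2=\mathcal{O}((\ln K)^2)$, degrading the bound to $\mathcal{O}((\ln k)^2/k)$; routing the quadratic term through the contraction instead is the one place where the argument must be set up just right.
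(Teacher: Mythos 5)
Your proposal is correct and follows essentially the same route as the paper's proof: start from Lemma~\ref{lem:opt_dist}, use strong convexity plus Jensen to obtain the contraction $(1-\mu\alpha(k))\|\rbf(k)\|^2$ together with the objective gap $-\tfrac{2\alpha(k)}{n}(f(\xbf_\ell(k))-f^*)$ via \eqref{thm_convex_rate:Eq1a}, exploit $\mu\alpha(k)\ge 1/(k+1)$ to telescope $k\|\rbf(k)\|^2-(k+1)\|\rbf(k+1)\|^2$ (your division by $\alpha(k)$ is the paper's multiplication by $k+1$ up to the constant $a$), bound the accumulated consensus and quantization errors by $\mathcal{O}(\ln(K+1))$, average, and convert the objective gap to a squared distance by strong convexity of $f$ with $\alpha(0)=a\ge 1/\mu$. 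Your closing remark correctly identifies the telescoping alignment as the step where the faster rate is won.
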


\begin{proof}
Let $\xbf^*$ be a solution of problem \eqref{prob:obj}. For convenience, let $\rbf(k) = \xbarbf(k)-\xbf^*.$ By Eq.\ \eqref{lem_opt_dist:Ineq} we have
\begin{align}
\|\rbf(k+1)\|^2 &\leq \|\rbf(k)\|^2-\frac{2\alpha(k)}{n}\sum_{i=1}^n\gbf_i(\xbf_i(k))^{T}(\xbf_i(k)-\xbf^*) \notag\\
&\qquad +\frac{2\left(4\sqrt{nd}\gamma+3L2^{b}\right)}{\sqrt{n}(2^b-1)}\alpha(k)\|\Xbf(k)-\1\xbarbf(k)^T\|\notag\\ 
&\qquad + \frac{2(4\sqrt{nd}\gamma + 3L2^{b})^2}{(2^b-1)^2\sqrt{n}}\alpha^2(k)\notag\\
&\leq \|\rbf(k)\| + \frac{2(4\sqrt{nd}\gamma + 3L2^{b})^2}{(2^b-1)^2\sqrt{n}}\alpha^2(k)\notag\\
&\qquad +\frac{2\left(4\sqrt{nd}\gamma+3L2^{b}\right)}{\sqrt{n}(2^b-1)}\alpha(k)\|\Xbf(k)-\1\xbarbf(k)^T\|\notag\\ 
&\qquad -\frac{2\alpha(k)}{n}\sum_{i=1}^n \Big(f_i(\xbf_i(k)) - f_i(\xbf^*)+\frac{\mu_i}{2}\|\xbf_i(k)-\xbf^*\|^2\Big),\label{thm_sconvex_rate:Eq1}
\end{align}
where the last inequality is due to the strong convexity of $f_i$, i.e., Eq.\ \eqref{notation:sc}. First, using the Jensen's inequality on quadratic function $(\cdot)^2$ we have
\begin{align*}
-\frac{1}{n}\sum_{i=1}^n\mu_i\|\xbf_i(k)-\xbf^*\|^2\leq -\mu\,\frac{1}{n}\sum_{i=1}^n\|\xbf_i(k)-\xbf^*\|^2\leq -\mu\|\xbarbf(k)-\xbf^*\|^2 = -\mu\,\|\rbf(k)\|^2.
\end{align*}
Fix some $\ell\in\Vcal$. Then, substituting the preceding relation into Eq.\ \eqref{thm_sconvex_rate:Eq1} and using Eq.\ \eqref{thm_convex_rate:Eq1a} yield
\begin{align*}
\|\rbf(k+1)\|^2 
&\leq \left(1-\mu\alpha(k)\right)\|\rbf(k)\|^2  + \frac{2(4\sqrt{nd}\gamma + 3L2^{b})^2}{(2^b-1)^2\sqrt{n}}\alpha^2(k)\notag\\
&\qquad +\frac{2\left(4\sqrt{nd}\gamma+3L2^{b}\right)}{\sqrt{n}(2^b-1)}\alpha(k)\|\Xbf(k)-\1\xbarbf(k)^T\|\notag\\ 
&\qquad -\frac{2\alpha(k)}{n}\sum_{i=1}^n \Big(f_i(\xbf_i(k)) - f_i(\xbf^*)\Big)\notag\\
&\stackrel{\eqref{thm_convex_rate:Eq1a} }{\leq} \left(1-\mu\alpha(k)\right)\|\rbf(k)\|^2  + \frac{2(4\sqrt{nd}\gamma + 3L2^{b})^2}{(2^b-1)^2\sqrt{n}}\alpha^2(k)\notag\\
&\qquad +\frac{2\left(4\sqrt{nd}\gamma+5L2^{b}\right)}{\sqrt{n}(2^b-1)}\alpha(k)\|\Xbf(k)-\1\xbarbf(k)^T\|\notag\\ 
&\qquad  - \frac{2}{n}\alpha(k)\Big(f(\xbf_{\ell}(k)) - f^*\Big),
\end{align*}
Note that $\alpha(k) = a\,/\,(k+1)$ with $a\geq 1\,/\,\mu$, implying $\mu\alpha(k)\geq 1\,/\,(k+1)$. Thus, the preceding equation gives 
\begin{align*}
\|\rbf(k+1)\|^2 &\leq \frac{k}{k+1}\|\rbf(k)\|^2 + \frac{2(4\sqrt{nd}\gamma + 3L2^{b})^2}{(2^b-1)^2\sqrt{n}}\alpha^2(k)\notag\\
&\qquad +\frac{2\left(4\sqrt{nd}\gamma+5L2^{b}\right)}{\sqrt{n}(2^b-1)}\alpha(k)\|\Xbf(k)-\1\xbarbf(k)^T\|\notag\\ 
&\qquad  - \frac{2}{n}\alpha(k)\Big(f(\xbf_{\ell}(k)) - f^*\Big).
\end{align*}
Multiplying both sides of the preceding equation by $k+1$, and using $(k+1)\,/\,k\leq 2$ and $\alpha(k) = \alpha(0) / (k+1)$ we have
\begin{align}
(k+1)\|\rbf(k+1)\|^2 &\leq k\|\rbf(k)\|^2 +  \frac{4\alpha(0)(4\sqrt{nd}\gamma + 3L2^{b})^2}{(2^b-1)^2\sqrt{n}}\alpha(k)\notag\\
&\qquad +\frac{2\left(4\sqrt{nd}\gamma+5L2^{b}\right)}{\sqrt{n}(2^b-1)}\|\Xbf(k)-\1\xbarbf(k)^T\|\notag\\ 
&\qquad - \frac{2\alpha(0)}{n}\Big(f(\xbf_{\ell}(k)) - f^*\Big),\label{thm_sconvex_rate:Eq1a}
\end{align}
By Eq.\ \eqref{lem_consensus:consensus_bound} and using $\|\Delta(t)\|\leq \alpha(t)$ we have
\begin{align}
\sum_{k=0}^{K}\|\Xbf(k)-\1\xbarbf(k)^T\| &\leq (3L+6)\sum_{k=0}^{K}\sum_{t=0}^{k-1}\sigma_2^{k-t}\alpha(t)\notag\\
&\leq (3L+6)\sum_{k=0}^{K-1}\alpha(k)\sum_{t=k+1}^{K}\sigma_2^{t}\leq \frac{3L+6}{1-\sigma_2}\sum_{k=0}^{K-1}\alpha(k).
\end{align}
Next, summing up both sides of Eq. \eqref{thm_sconvex_rate:Eq1a} over $k = 0,\ldots,K$ for some $K\geq 0$, using the preceding relation, and rearranging we obtain  
\begin{align*}
\frac{2\alpha(0)}{n}\sum_{k=0}^{K}\Big(f(x_{\ell}(k)) - f^*\Big) &\leq \frac{2\alpha(0)(6\sqrt{nd}\gamma + 5L2^{b})^2}{\sqrt{n}(1-\sigma_2)(2^b-1)^2}\sum_{k=0}^{K}\alpha(k)\notag\\
&\leq \frac{2\alpha(0)(6\sqrt{nd}\gamma + 5L2^{b})^2}{\sqrt{n}(1-\sigma_2)(2^b-1)^2}(\ln(K+1)+1),
\end{align*}
which when dividing both sides by $(K+1)\,/\,n$ and using the convexity of $f$ yields 
\begin{align*}
&2\alpha(0)\Big[f(\zbf_{\ell}(K)) - f^*\Big] \leq \frac{2\sqrt{n}\alpha(0)(6\sqrt{nd}\gamma + 5L2^{b})^2}{(1-\sigma_2)(2^b-1)^2}\frac{1+\ln(K+1)}{K+1}.
\end{align*}
Since the functions $f_i$ are strongly convex with constant $\mu_i$, $f$ is strongly convex with constant $\mu$. Thus, using the preceding equation and $\alpha(0) = a\geq 1\,/\,\mu$ gives Eq.\ \eqref{thm_sconvex_rate:Ineq}, i.e.,
\begin{align*}
&\|\zbf_{\ell}(K)-\xbf^*\|^2 \leq \frac{2}{\mu}\left[f(\zbf_{\ell}(K)) - f^* \right]\leq \frac{2\sqrt{n}\alpha(0)(6\sqrt{nd}\gamma + 5L2^{b})^2}{(1-\sigma_2)(2^b-1)^2}\frac{1+\ln(K+1)}{K+1}\cdot
\end{align*}
\end{proof}


\section{Simulations}\label{sec:simulation}
In this section, we apply Algorithm \ref{alg:QDSG} for solving linear regression problems, the most popular technique for data fitting \cite{HTF2009,SSBD2014} in statistical machine learning, over a network of processors under random quantization. The goal of this problem is to find a linear relationship between a set of variables and some real value outcome. That is, given a training set $S = \{(\abf_i,b_i)\in\mathbb{R}^{d}\times\mathbb{R}\}$ for $i=1,\ldots,n$, we want to learn a parameter $\xbf$ that minimizes 
\begin{align*}
\min_{\xbf\in\Xcal}\sum_{i=1}^n f_i(\xbf;\abf_i,b_i),
\end{align*}
where $\Xcal = [-1\,,\, 1]^{d}$ and $d=10$, i.e., $\xbf,\, \abf_i\in\Rset^{10}$. Here, $f_i$ are the loss functions defined over the dataset. For the purpose of our simulation, we will consider two loss functions, namely, quadratic loss and absolute loss functions. While the quadratic loss is strongly convex, the absolute loss is only convex.         

First, when $f_i$ are quadratic, we have the well-known least square problem
\begin{align*}
\min_{\xbf\in\Xcal}\;\sum_{i=1}^n(\abf_i^T\xbf-b_i)^2.
\end{align*}
Second, regression problems with absolute loss functions (or L$1$ norm) is often referred to as robust regression, which is known to be robust to outliers \cite{karst1958linear}, given as follows
\begin{align*}
\min_{\xbf\in\Xcal}\;\sum_{i=1}^n |\, \abf_i^T\xbf-b_i\,|.
\end{align*} 

We consider simulated training data sets, i.e., $(\abf_i,b_i)$ are generated randomly with uniform distribution between $[0,1]$.  We consider the performance of the distributed subgradient methods on an undirected connected graph of $50$ nodes, i.e., $\Gcal=(\Vcal,\Ecal)$ and $n = |\Vcal|=100$. Our graph is generated as follows. 
\begin{enumerate}
\item In each network, we first randomly generate the nodes' coordinates in the plane with uniform distribution.
\item Then any two nodes are connected if their distance is less than a reference number $r$, e.g, $r = 0.4$ for our simulations.
\item Finally we check whether the network is connected. If not we return to step $1$ and run the program again.
\end{enumerate}
To implement our algorithm, the adjacency matrix $\Abf$ is chosen as a lazy Metropolis matrix corresponding to $\Gcal$, i.e.,
\begin{align}
\Abf = [a_{ij}] = \left\{\begin{array}{ll}
\frac{1}{2(\max\{|\mathcal{N}_i| , |\Ncal_j|\})}, & \text{ if } (i,j) \in \Ecal\\
0, &\text{ if } (i,j)\notin\Ecal \text{ and } i\neq j\\
1-\sum_{j\in\Ncal_i}a_{ij},& \text{ if } i = j
\end{array}\right.\nonumber
\end{align}
It is obvious to see that the lazy Metropolis matrix $\Abf$ satisfies Assumption \ref{assump:doub_stoch}.

\subsection{Convergence of Function Values}

  \begin{figure} 
   \begin{subfigure}[b]{0.5\textwidth}
         \centering
     \includegraphics[width=\textwidth]{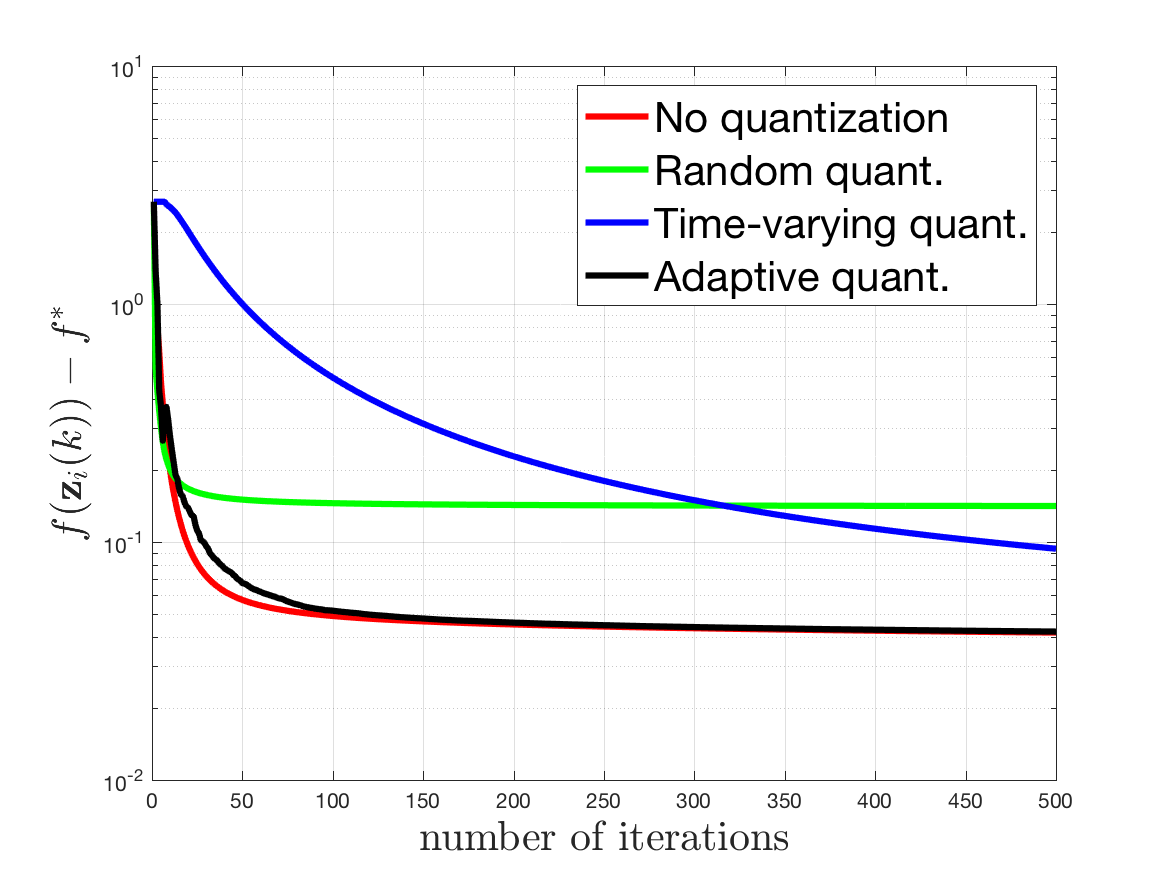}
     \caption{Quadratic loss functions}
      \label{fig:sc_func_value}
     \end{subfigure} 
   \begin{subfigure}[b]{0.5\textwidth}
         \centering \vspace{0.2cm}
     \includegraphics[width=\textwidth]{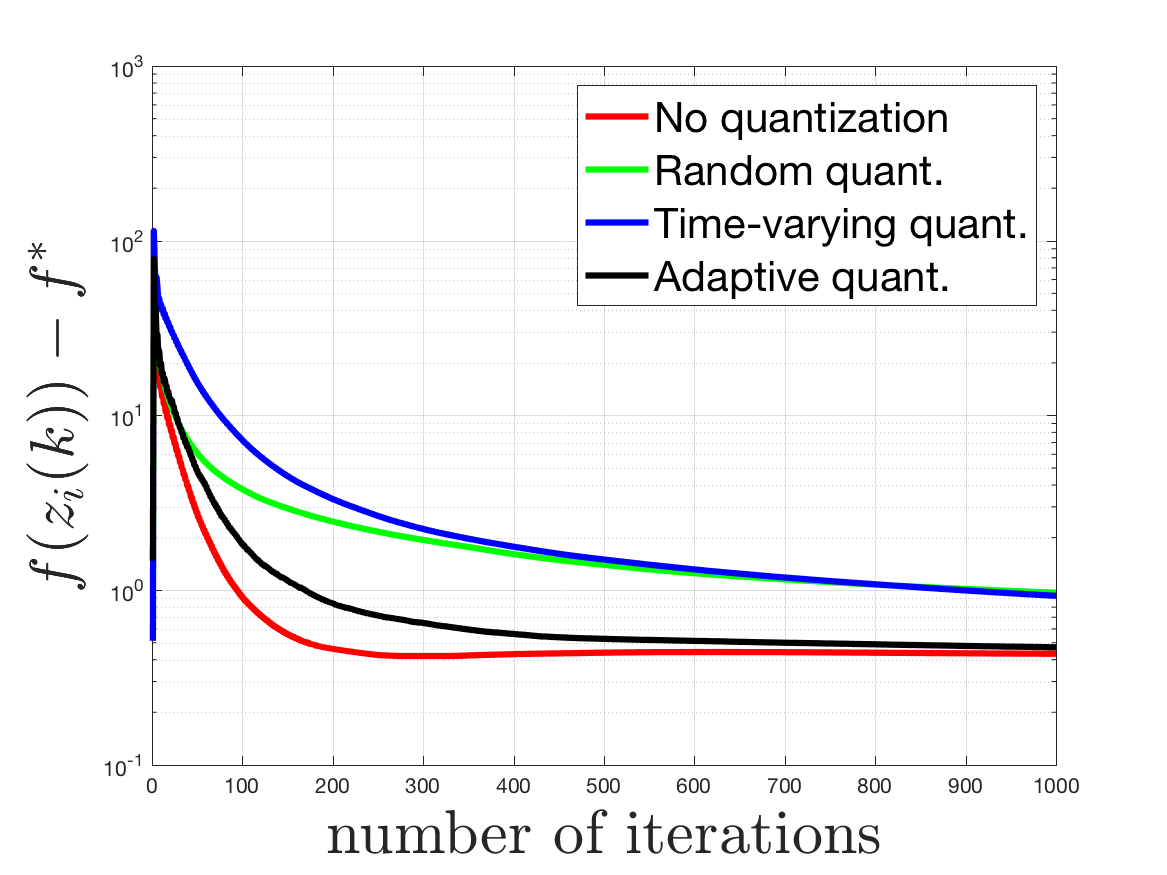}
         \caption{Absolute loss functions}
          \label{fig:c_func_value}
     \end{subfigure} 
  \caption{The convergence of function values using distributed subgradient methods without (\protect\redline) \cite{NedicOP2010}, with random  (\protect\greenline) \cite{DoanMR2018b}, with time-varying  (\protect\blueline) \cite{DoanMR2018a}, and with adaptive  (\protect\blackline) quantization for $n=100$ and $d=10$ are illustrated.}
  \label{fig:func_value}
  \end{figure}
In this simulation, we apply variants of distributed subgradient methods for solving the linear regression problems. In particular, we compare the performance of such methods for three different scenarios, namely, {\sf DSG} with no quantization (i.e., Eq.\ \eqref{distributed:DSG}), {\sf DSG} with time-varying quantization in \cite{DoanMR2018a}, distributed stochastic approximation under random quantization \cite{DoanMR2018b}, and the proposed Algorithm \ref{alg:QDSG} with adaptive quantization. In addition, we use $8$ bits as the size of the nodes' communication bandwidths. The plots in Fig.\ \ref{fig:func_value} show the convergence of these four methods for both quadratic and absolute loss functions.  
 
Note that, DSG with time-varying quantization  \cite{DoanMR2018a} achieves the same rate of convergence as the one with no quantization \cite{NedicOP2010}, but requires that the nodes eventually exchange an infinite number of bits. On the other hand, DSG with random quantization  \cite{DoanMR2018b} only requires a finite number of bits, but achieves a slow rate of convergence. The adaptive quantization in this paper achieves both benefits of time-varying quantization \cite{DoanMR2018a} and random quantization \cite{DoanMR2018b}, i.e., it achieves the same rate as the algorithm without quantization but only using a finite number of bits. In addition, as observed in Fig.\ref{fig:sc_func_value} for quadratic loss and in Fig. \ref{fig:c_func_value} for absolute loss, Algorithm \ref{alg:QDSG} performs almost as well as the one without quantization \cite{NedicOP2010}, and significantly better than the algorithms in \cite{DoanMR2018a,DoanMR2018b}

 \begin{figure} 
   \begin{subfigure}[b]{0.5\textwidth}
        \centering
    \includegraphics[width=\textwidth]{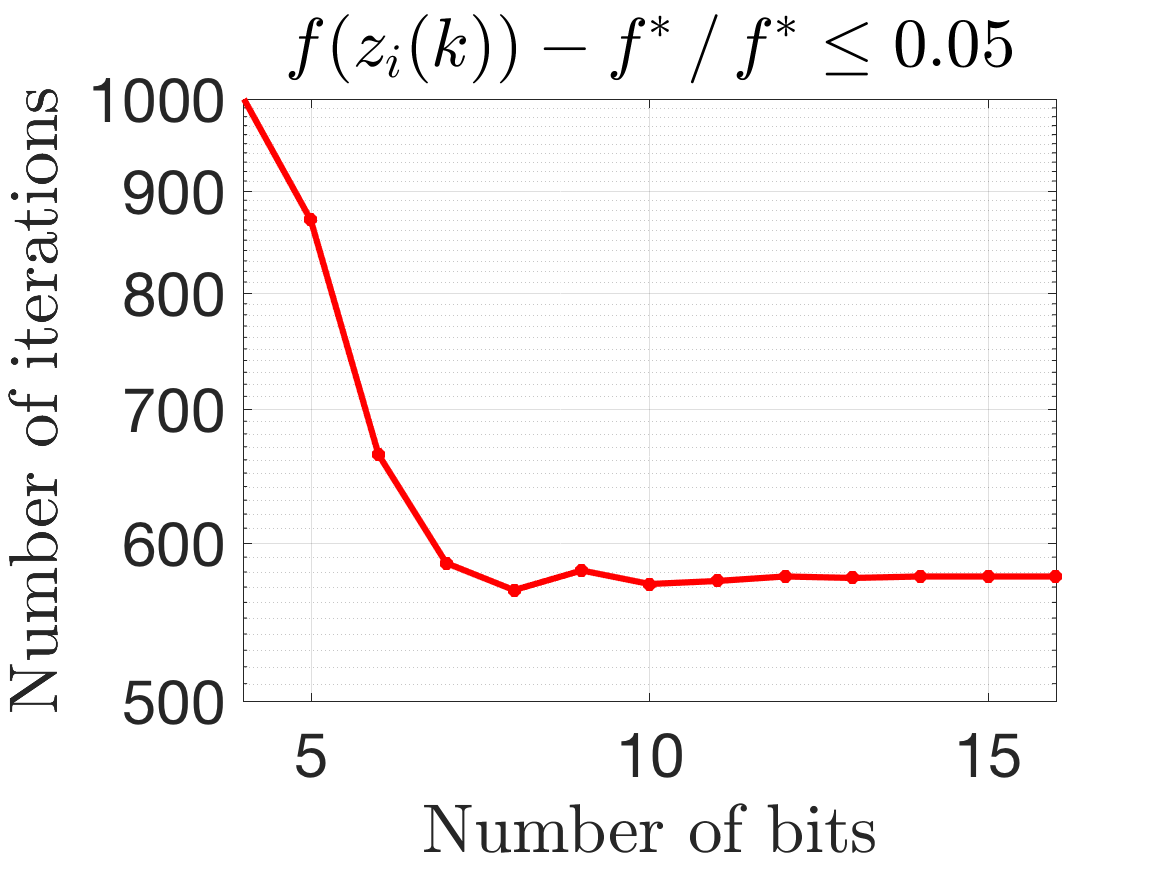}
    \caption{Quadratic loss functions}
    \end{subfigure} 
   \begin{subfigure}[b]{0.5\textwidth}
        \centering    
    \includegraphics[ width = \textwidth]{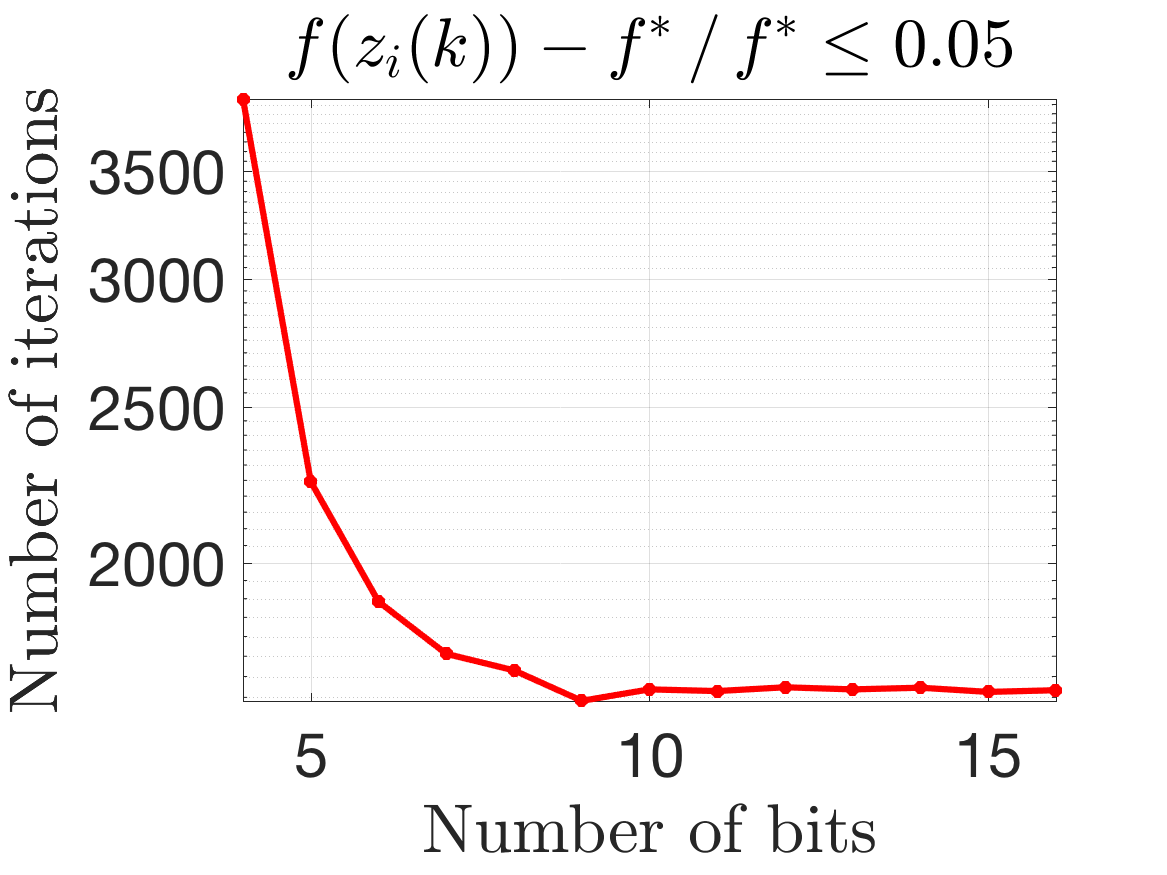}
        \caption{Absolute loss functions}
    \end{subfigure} 
 \caption{The number of iterations as a function of $b$ using distributed subgradient methods with adaptive  quantization for $n=100$ and $d=10$ are illustrated.}
 \label{fig:b_impact}
 \end{figure}

\subsection{Impact of the Number of Bits, $b$}
We now consider the impact of the number of bits $b$ on the performance of Algorithm \ref{alg:QDSG}. In Fig. \ref{fig:b_impact} we plots the number of iterations, needed to obtain the relative error $f(z_i(k))-f^*\,/\,f^*\leq 0.05$, as a function of $b$. We see that the more bits we use, the faster the algorithm converges. Moreover, even when only a very small number of bits, for example, b= 4 are used, the algorithm still works very well. Finally, these plots appear to describe the  curve $1/(2^b-1)^2$ upto some constant in the upper bound of convergence rates given in Theorems \ref{thm_convex:rate} and \ref{thm_sconvex:rate}. This implies that the simulation seems to agree with our results.

\section{Concluding Remarks}
In this paper, we consider distributed optimization over networks of nodes under finite bandwidths, and so information exchanged across the network must be quantized. For solving such problems, we consider distributed subgradient methods under quantization. Our main contribution is to propose a novel adaptive quantization, which quantizes the nodes' estimates based on the progress of the algorithm. Under this adaptive quantization, we show that the rates of convergence of {\sf DSG} are unaffected by communication constraints. A natural question from this work is to ask whether the proposed adaptive quantization can be extended to study other distributed algorithms under finite bandwidths, such as, distributed primal-dual, ADMM, dual averaging, and mirror-descent. Indeed, it is not obvious whether we can meet the conditions presented in Section \ref{subsec:alg}. We leave such an interesting question for our future research.

\bibliographystyle{IEEEtran}
\bibliography{refs}
\newpage
\appendix
\section{Proofs of Results in Section \ref{subsec:prelim} }\label{sec:proofs}

\subsection{Proof of Lemma \ref{lem:projection}}
\begin{proof}
For convenience, we use  $\wbf_i(k)$ to denote $\sum_{j\in\Ncal_i}a_{ij}\xbf_j(k)$ only in this proof. Since $\xbf_i(k)\in\Xcal$, by convexity of $\Xcal$, we have that $\wbf_i(k)\in\Xcal$.
Recall that $\Delta_i(k) = \xbf_i(k) - \qbf_i(k)$. By the definition of the projection and using Eqs.\ \eqref{prop:bounded_subg_ineq} and \eqref{prelim:xi} we obtain Eq.\ \eqref{lem_project:Ineq1}, i.e., 
\begin{align*}
\|\xibf_i(\vbf_i(k))\| &=  \left\| \vbf_i(k)-\left[\vbf_i(k)\right]_{\Xcal}\right\|^2\leq \left\|\vbf_i(k) - \wbf_i(k)\right\|^2\notag\\
&= \left\|\Delta_i(k) - \sum_{j\in\Ncal_i}a_{ij}\Delta_j(k) -\alpha(k)\gbf_i(\xbf_i(k))\right\|^2\notag\\
&\leq \left\|\Delta_i(k) - \sum_{j\in\Ncal_i}a_{ij}\Delta_j(k)\right\| + L_i\alpha(k)\notag\\
&\leq  \sum_{j\in\Ncal_i}a_{ij}\left\|\Delta_i(k) -\Delta_j(k)\right\| + L_i\alpha(k).
\end{align*}
Using the preceding inbequality also yields Eq.\ \eqref{lem_project:Ineq2}, i.e., 
\begin{align*}
\sum_{i=1}^n\|\xibf_i(\vbf_i(k))\|^2 &\leq  2\sum_{i=1}^n\left\|\Delta_i(k) - \sum_{j\in\Ncal_i}a_{ij}\Delta_j(k)\right\|^2 + 2\sum_{i=1}^n L_i^2\alpha^2(k)\notag\\
&\leq 2\sum_{i=1}^n\sum_{j\in\Ncal_i}a_{ij}\left\|\Delta_i(k) - \Delta_j(k)\right\|^2 + 2L^2\alpha^2(k)\notag\\
&\leq 8\sum_{i=1}^n\|\Delta_i(k)\|^2 + 2L^2\alpha^2(k),  
\end{align*}
where the second inequality follows from using Jensen's inequality.
\end{proof}

\subsection{Proof of Lemma \ref{lem:consensus} }\label{subsec:proof_lem_consensus}
\begin{proof}
For convenience let $\Wbf = \Ibf - 1/n\1\1^T$ and $\Ybf(k)$ be defined as
\[
\Ybf(k) = \Xbf(k)-\1\xbarbf(k)^T = \Wbf\Xbf(k).
\]
Using $\Abf\1 = \1$ and Eqs. \eqref{prelim:X} and \eqref{prelim:xbar} we consider
\begin{align*}
\Ybf(k+1) &= \Xbf(k+1) - \1\xbarbf(k+1)^T\\  &= \Abf\Xbf(k) + (\Ibf-\Abf)(\Xbf(k)-\Qbf(k)) -\alpha(k)\Gbf(\Xbf(k)) - \Xibf(\Vbf(k))\\
&\qquad - \1\xbarbf(k)^T +\frac{\alpha(k)}{n}\1\sum_{i=1}^n\gbf_i(\xbf_i(k))^T + \1\bar{\xibf}(k)^T\notag\\
&= \Abf\Wbf\Xbf(k) + (\Ibf-\Abf)\Delta(k)-\alpha(k)\Wbf\Gbf(\Xbf(k)) - \Wbf\Xi(\Vbf(k)),
\end{align*}
which by taking the Frobenius norm on both sides yields
\begin{align}
\left\|\Ybf(k+1)\right\| &= \left\|\Abf\Wbf\Xbf(k) + (\Ibf-\Abf)\Delta(k)-\alpha(k)\Wbf\Gbf(\Xbf(k)) - \Wbf\Xibf(\Vbf(k))\right\|\notag\\
&\leq \left\|\Abf\Wbf\Xbf(k)\right\| + \left\|(\Ibf-\Abf)\Delta(k)\right\| + \left\|\alpha(k)\Wbf\Gbf(\Xbf(k)) - \Wbf\Xibf(\Vbf(k))\right\|\notag\\
&\leq \sigma_2\left\|\Wbf\Xbf(k)\right\| + 2\left\|\Delta(k)\right\| + \alpha(k)\left\|\Gbf(\Xbf(k))\right\| + \left\|\Xibf(\Vbf(k))\right\|,\label{lem_consensus:Eq1}
\end{align}
where the last inequality is because the largest singular values of  $\Wbf$ and $\Abf$ are smaller than $1$ and  using the Courant-Fisher theorem \cite{HJ1985}, i.e.,
\begin{align*}
\left\|\Abf\Wbf\Xbf(k)\right\| = \left\|\Abf\left(\Ibf-\frac{1}{n}\1\1^T\right)\Xbf(k)\right\|\leq \sigma_2\left\|\left(\Ibf-\frac{1}{n}\1\1^T\right)\Xbf(k)\right\| = \sigma_2\left\|\Wbf\Xbf(k)\right\|.
\end{align*}
First, using $L = \sum_{i=1}^n L_i$  Eq.\ \eqref{prop:bounded_subg_ineq} gives
\begin{align}
\|\Gbf(\Xbf(k))\| \leq \sqrt{\sum_{i=1}^n\|\gbf_i(\xbf_i(k))\|^2} \leq \sqrt{\sum_{i=1}^n L_i^2} \leq L.\label{lem_consensus:Eq1a}    
\end{align}
Second, Eq.\ \eqref{lem_project:Ineq2} yields
\begin{align}
\|\Xibf(\Vbf(k))\| \leq 2\sqrt{2}\|\Delta(k)\| + \sqrt{2}L\alpha(k).\label{lem_consensus:Eq1b}    
\end{align}
Thus using Eqs.\ \eqref{lem_consensus:Eq1a} and  \eqref{lem_consensus:Eq1b} into Eq.\ \eqref{lem_consensus:Eq1} yields Eq.\ \eqref{lem_consensus:consensus_bound}, i.e.,
\begin{align*}
\left\|\Ybf(k+1)\right\| &\leq \sigma_2\|\Ybf(k)\| + 6\|\Delta(k)\| +3L\alpha(k)\\
&\leq \sigma_2^{k+1}\|\Ybf(0)\| + 6\sum_{t=0}^{k}\sigma_2^{k-t}\|\Delta(t)\| + 3L\sum_{t=0}^{k}\sigma_2^{k-t}\alpha(t)\\
&= 6\sum_{t=0}^{k}\sigma_2^{k-t}\|\Delta(t)\| + 3L\sum_{t=0}^{k}\sigma_2^{k-t}\alpha(t),
\end{align*}
where the last equality is due to $\xbf_i(0) = \xbf_j(0)$ for all $i,j\in\Vcal$, implying $\Ybf(0) = \mathbf{0}$. 
\end{proof}

\subsection{Proof of Lemma \ref{lem:quantized_error}}
\begin{proof}
For convenience, just in this proof, we define $\wbf_i(k)$ as \[
\wbf_i(k) =  \left[ \sum_{j\in\Ncal_i}a_{ij}\qbf_j(k)\right]_{\Xcal} = \sum_{j\in\Ncal_i}a_{ij}\qbf_j(k)-\pbf_i(k), 
\]
where $\pbf_i(k) \triangleq  \sum_{j\in\Ncal_i}a_{ij}\qbf_j(k) - \wbf_i(k)$. Using the definition of the projection and since $\sum_{j\in\Ncal_i}a_{ij}\xbf_j(k)\in\Xcal$ we have
\begin{align}
\|\,\pbf_i(k)\,\| &= \left\| \sum_{j\in\Ncal_i}a_{ij}\qbf_j(k)-\wbf_i(k)\right\|\leq \left\|\sum_{j\in\Ncal_i}a_{ij}\xbf_j(k) - \sum_{j\in\Ncal_i}a_{ij}\qbf_j(k)\right\|\notag\\
&\leq \sum_{j\in\Ncal_i}a_{ij}\left\|\Delta_j(k)\right\|\label{lem_quantized_error:Eq0a},
\end{align}
which implies
\begin{align}
\left\|\pbf_i(k) - \bar{\pbf}(k)\right\| &\leq  \left\|\Pbf(k) - \1\bar{\pbf}(k)^T\right\|\leq \|\Pbf(k)\|\notag\\
& \leq \sqrt{\sum_{i=1}^n\left(\sum_{j\in\Ncal_i}a_{ij}\left\|\Delta_j(k)\right\|\right)^2}\leq \|\Delta(k)\|.
\label{lem_quantized_error:Eq0b}    
\end{align}
Moreover, since $\Delta_i(k) = \xbf_i(k) - \qbf_i(k)$ we have
\begin{align}
\|\qbf_i(k) - \qbarbf(k)\|\leq \|\Qbf(k)-\1\qbarbf(k)^T\|\leq \|\Xbf(k)-\1\xbarbf(k)^T \| + \|\Delta(k)\|.    \label{lem_quantized_error:Eq0c}   
\end{align}
Using the triangle inequality we now consider
\begin{align}
\|\xbf_i(k+1) - \qbf_i(k)\| & \leq\|\xbf_i(k+1) - \wbf_i(k)\| + \|\wbf_i(k) - \bar{\wbf}(k)\| + \|\bar{\wbf}(k) - \qbf_i(k)\|.\label{lem_quantized_error:Eq1} 
\end{align}
We now provide an upper bound for each term on the right-hand side of Eq.\ \eqref{lem_quantized_error:Eq1}. First, by the nonexpansiveness of the projection, and Eqs.\ \eqref{prop:bounded_subg_ineq} and \eqref{distributed:QDSG}  we have
\begin{align}
\|\xbf_i(k+1)-\wbf_i(k)\|\leq \|\Delta_i(k) - \alpha(k)\gbf_i(\xbf_i(k))\| \leq \|\Delta_i(k)\| + L_i\alpha(k).\label{lem_quantized_error:Eq1a}
\end{align}
Second, using Eqs.\ \eqref{lem_quantized_error:Eq0b} and \eqref{lem_quantized_error:Eq0c} we have
\begin{align}
\left\|\wbf_i(k) - \bar{\wbf}(k)\right\| &= \left\|\sum_{j\in\Ncal_i}a_{ij}\qbf_j(k) - \bar{\qbf}(k) -  \pbf_i(k) + \bar{\pbf}(k)\right\|\notag\\
&\leq  \sum_{j\in\Ncal_i}a_{ij}\|\qbf_j(k) - \bar{\qbf}(k)\| +  \left\|\pbf_i(k) - \bar{\pbf}(k)\right\|\notag\\
&\leq  \|\Xbf(k)-\1\xbarbf(k)^T \| + 2\|\Delta(k)\|.\label{lem_quantized_error:Eq1b}
\end{align}
Third, using Eq.\ \eqref{lem_quantized_error:Eq0c} we consider 
\begin{align}
\|\bar{\wbf}(k)-\qbf_i(k)\| &= \|\bar{\qbf}(k)-\qbf_i(k) +  \bar{\pbf}(k)\|\leq \|\qbf_i(k) - \bar{\qbf}(k)\| +  \|\bar{\pbf}(k)\|\notag\\
&\leq \|\Xbf(k)-\1\xbarbf(k)^T \| + 2\|\Delta(k)\|. \label{lem_quantized_error:Eq1c}
\end{align}
Substituting Eqs.\ \eqref{lem_quantized_error:Eq1a}--\eqref{lem_quantized_error:Eq1c} into Eq.\ \eqref{lem_quantized_error:Eq1} yields
\begin{align*}
\|\xbf_i(k+1) - \qbf_i(k)\| &\leq    2\|\Xbf(k)-\1\xbarbf(k)^T\| + 5\|\Delta(k)\| +  L\alpha(k),
\end{align*}
which by Eq.\ \eqref{lem_consensus:consensus_bound} gives
\begin{align}
\|\xbf_i(k+1) - \qbf_i(k)\| &\leq 12\sum_{t=0}^{k-1}\sigma_2^{k-1-t}\|\Delta(t)\| + 6L\sum_{t=0}^{k-1}\sigma_2^{k-1-t}\alpha(t)\notag\\
&\qquad +  5\|\Delta(k)\| + L\alpha(k)\notag\\
&\leq  12\sum_{t=0}^{k}\sigma_2^{k-t}\|\Delta(t)\| + 6L\sum_{t=0}^{k}\sigma_2^{k-t}\alpha(t)\label{lem_quantized_error:Eq2}. 
\end{align}
Recall that $\gamma = 48(2+ L)\,/\,(1-\sigma_2)$ and 
\[
\Rcal_i(k+1) \triangleq \left[\qbf_i(k) - \frac{\gamma}{2}\alpha\left(k\right)\1,\,\qbf_i(k) +\frac{\gamma}{2}\alpha\left(k\right)\1\right].
\]
We now show that $\xbf_i(k+1)\in\Rcal_i(k+1)$ by induction. First, when $k=0$ we have $\|\Delta_i(0)\| = 0$ since $\xbf_i(0) = \qbf_i(0)$ for all $i\in\Vcal$. By definition, we have  $\xbf_i(0)\in\Rcal_i(0)$. Suppose it is true for some $k>0$, that is, $\xbf_i(k)\in\Rcal_i(k)$. We now show that $\xbf_i(k+1)\in\Rcal_i(k+1)$. Indeed, since $\alpha(k)$ is nonincreasing, by the definition of $\Rcal_i(k)$ we have $\Delta_i(k)$ is nonincreasing and 
\[
\Delta_i(t) \leq \frac{\gamma\alpha\left(t\right)}{2^b-1}\1,\qquad \forall i\in\Vcal,\;\;t\in[0,k]. 
\]
Using Assumption \ref{assump:bits}, i.e.,  $\sqrt{nd}\gamma/ (2^b-1) \leq 1$, we have $\|\Delta(t)\| \leq \alpha(t)$ for all $t\in[0,k]$. Thus, by using $\alpha(k)\leq\alpha(t)\leq \alpha(0) = 1$ for $t\in[0,k]$,
Eq.\ \eqref{lem_quantized_error:Eq2} gives
\begin{align*}
\|\xbf_i(k+1) - \qbf_i(k)\| &\leq 12\sum_{t=0}^{k}\sigma_2^{k-t}\alpha(t) + 6L\sum_{t=0}^{k}\sigma_2^{k-t}\alpha(t)\nonumber\\
&= 6(2+ L)\left(\sum_{t=0}^{\lfloor k/2\rfloor}\sigma_2^{k-t}\alpha(t)+\sum_{t=\lceil k/2\rceil }^k \sigma_2^{k-t}\alpha(t)\right)\notag\\
&\leq 6(2+ L)\left(\frac{\alpha(0)\sigma_2^{\lceil k/2\rceil}}{1-\sigma_2}+\frac{\alpha(\lceil k/2\rceil)}{1-\sigma_2}\right)\notag\\
&\leq \frac{24(2+ L)}{1-\sigma_2}\alpha(k) = \frac{\gamma}{2}\alpha(k),
\end{align*}
where in the last inequality is due to $\sigma_{2}^{k}\leq \alpha(k),$ $\alpha(0) = 1$, $\alpha(\lceil k/2\rceil)\leq 2\alpha(k)$ since we only consider $\alpha(k) = 1/(k+1)$ or $\alpha(k) = 1/\sqrt{k+1}$. This concludes our proof.   
\end{proof}

\subsection{Proof of Lemma \ref{lem:consensus_bound}}

\begin{proof}
First, Eq.\ \eqref{lem_quantized_error:error_bound} yields 
\begin{align*}
\|\Delta(k)\| \leq \frac{\sqrt{nd}\gamma}{2^b-1}\alpha(k), 
\end{align*}
which using Eq.\ \eqref{lem_consensus:consensus_bound} gives
\begin{align*}
\|\Xbf(k+1)-\1\xbarbf(k+1)^T\| &\leq \frac{6\sqrt{nd}\gamma}{2^b-1}\sum_{t=0}^{k}\sigma_2^{k-t}\alpha(t) + 3L\sum_{t=0}^{k}\sigma_2^{k-t}\alpha(t)\notag\\
&\leq \left(\frac{6\sqrt{nd}\gamma+3L2^b}{2^b-1}\right)\left(\sum_{t=0}^{\lfloor k/2\rfloor }\sigma_2^{k-t}\alpha(t) + \sum_{t=\lceil k/2\rceil}^{k}\sigma_2^{k-t}\alpha(t)\right)\nonumber\\
&\leq \left(\frac{6\sqrt{nd}\gamma+3L2^b}{2^b-1}\right)\left(\sum_{t=0}^{\lfloor k/2\rfloor }\sigma_2^{k-t} + \alpha(\lceil k/2\rceil) \sum_{t=\lfloor k/2\rfloor}^{k}\sigma_2^{k-t}\right)\notag\\
&\leq \left(\frac{6\sqrt{nd}\gamma+3L2^b}{2^b-1}\right)\left(\frac{1}{1-\sigma_2}\sigma_2^{\lceil k/2\rceil} + \frac{1}{1-\sigma_2}\alpha(\lceil k/2\rceil)\right),
\end{align*}
which since $\lim_{k\rightarrow\infty}\alpha(k) = 0$ gives Eq.\ \eqref{lem_consensus:asymp_conv}.

Suppose now that the condition \eqref{lem_consensus:square_summable} is held. Then, for some $K\geq0$ we have Eq.\ \eqref{lem_consensus:finite_sum}, i.e., 
\begin{align*}
&\sum_{k=0}^{K}\alpha(k)\|\Xbf(k)-\1\xbarbf(k)^T\|\leq \left(\frac{6\sqrt{nd}\gamma+3L2^b}{2^b-1}\right)\sum_{k=0}^{K}\alpha(k)\sum_{t=0}^{k-1}\sigma_2^{k-1-t}\alpha(t)\notag\\
&\leq \left(\frac{6\sqrt{nd}\gamma+3L2^b}{2^b-1}\right)\sum_{k=0}^{K}\sum_{t=0}^{k-1}\sigma_2^{k-1-t}\alpha^2(t)= \left(\frac{6\sqrt{nd}\gamma+3L2^b}{2^b-1}\right)\sum_{t=0}^{K}\alpha^2(t)\sum_{k=t+1}^{K}\sigma_2^{k}\notag\\
&\leq  \left(\frac{6\sqrt{nd}\gamma+3L2^b}{(1-\sigma_2)(2^b-1)}\right)\sum_{t=0}^{K}\alpha^2(t) \stackrel{\eqref{lem_consensus:square_summable}}{<} \infty. 
\end{align*}
\item Suppose now that $\alpha(k) = 1/\sqrt{k+1}$. Then by the inequality above we have Eq.\ \eqref{lem_consensus:rate}, i.e., 
\begin{align*}
\sum_{k=0}^{K}\alpha(k)\|\Xbf(k)-\1\xbarbf(k)^T\|
&\leq \left(\frac{6\sqrt{nd}\gamma+3L2^b}{(1-\sigma_2)(2^b-1)}\right)\sum_{t=0}^{K}\frac{1}{t+1}\notag\\
&\leq \left(\frac{6\sqrt{nd}\gamma+3L2^b}{(1-\sigma_2)(2^b-1)}\right)(1+\ln(K+1)),
\end{align*}
where we use the integral test in the last inequality to have
\begin{align*}
\sum_{t=0}^{K-1}\frac{1}{t+1}\leq 1 + \int_{0}^{K}\frac{1}{t+1}dt \leq 1 + \ln(K+1). 
\end{align*}
\end{proof}

\subsection{Proof of Lemma \ref{lem:opt_dist}}\label{subsec:proof_lem_opt_dist}

\begin{proof}
Let $\xbf^*$ be a solution of problem \eqref{prob:obj}. For convenience, let $\rbf(k) = \xbarbf(k)-\xbf^*.$ First, recall from Eq. \eqref{lem_project:Ineq2} that
\begin{align}
\|\Xibf(\Vbf(k))\|^2 \leq 8\|\Delta(k)\|^2 + 2L^2\alpha^2(k).\label{lem_opt_dist:Eq0a}     
\end{align}
Second, by the definition of the projection we have
\begin{align}
\xibf_i(\vbf_i(k))^T(\xbf^* - \vbf_i(k))\leq -\|\xibf_i(\vbf_i(k))\|^2.    \label{lem_opt_dist:Eq0b}
\end{align}
We now use Eq.\ \eqref{prelim:xbar} to have
\begin{align}
\|\rbf(k+1)\|^2 &= \left\|\xbarbf(k)-\xbf^*-\frac{\alpha(k)}{n}\sum_{i=1}^n\gbf_i(\xbf_i(k)) -\xibarbf(k)\right\|^2\notag\\
&= \|\rbf(k)\|^2 -2(\xbarbf(k)-\xbf^*)^T\xibarbf(k) -\frac{2\alpha(k)}{n}(\xbarbf(k)-\xbf^*)^T\sum_{i=1}^n \gbf_i(\xbf_i(k))\notag\\
&\qquad + \left\|\frac{\alpha(k)}{n}\sum_{i=1}^n\gbf_i(\xbf_i(k)) + \xibarbf(k)\right\|^2\notag\\
&\leq \|\rbf(k)\|^2 -2(\xbarbf(k)-\xbf^*)^T\xibarbf(k) -\frac{2\alpha(k)}{n}(\xbarbf(k)-\xbf^*)^T\sum_{i=1}^n \gbf_i(\xbf_i(k))\notag\\
&\qquad + 2\left\|\frac{\alpha(k)}{n}\sum_{i=1}^n\gbf_i(\xbf_i(k))\right\|^2 + 2\left\|\xibarbf(k)\right\|^2\notag\\
&\stackrel{\eqref{lem_opt_dist:Eq0a}}{\leq}  \|\rbf(k)\|^2 -2(\xbarbf(k)-\xbf^*)^T\xibarbf(k) -\frac{2\alpha(k)}{n}(\xbarbf(k)-\xbf^*)^T\sum_{i=1}^n \gbf_i(\xbf_i(k))\notag\\
&\qquad + \frac{16\Delta^2(k)+6L^2\alpha^2(k)}{n}\cdot\label{lem_opt_dist:Eq1}
\end{align}
We now analyze the second term on the right-hand side of Eqs.\ \eqref{lem_opt_dist:Eq1} by using Eqs.\ \eqref{lem_opt_dist:Eq0a} and \eqref{lem_opt_dist:Eq0b}
\begin{align}
&-2(\xbarbf(k)-\xbf^*)^T\xibarbf(k) = -\frac{2}{n}\sum_{i=1}^n\xibf_i(\vbf_i(k))^T(\xbarbf(k)-\xbf^*)\notag\\
&\qquad= -\frac{2}{n}\sum_{i=1}^n\xibf_i(\vbf_i(k))^T(\xbarbf(k)-\vbf_i(k)+\vbf_i(k)-\xbf^*)\notag\\
&\qquad\leq \frac{2}{n}\sum_{i=1}^n\|\xibf_i(\vbf_i(k))\|\,\|\xbarbf(k)-\vbf_i(k)\| - \frac{2}{n}\sum_{i=1}^n\xibf_i(\vbf_i(k))^T(\vbf_i(k)-\xbf^*)\notag\\
&\qquad\stackrel{\eqref{lem_opt_dist:Eq0a}}{\underset{\eqref{lem_opt_dist:Eq0b}}{\leq}} \frac{2\left(2\sqrt{2}\|\Delta(k)\|+\sqrt{2}L\alpha(k)\right)}{n}\sum_{i=1}^n\,\|\xbarbf(k)-\vbf_i(k)\| -\frac{2}{n}\sum_{i=1}^n\|\xibf_i(\vbf_i(k))\|^2\notag\\
&\qquad\leq \frac{2\left(2\sqrt{2}\|\Delta(k)\|+\sqrt{2}L\alpha(k)\right)}{\sqrt{n}}\|\Xbf(k)-\1\xbarbf(k)^T\|\notag\\
&\qquad \qquad + \frac{2\left(2\sqrt{2}\|\Delta(k)\|+\sqrt{2}L\alpha(k)\right)(2\sqrt{n}\|\Delta(k)\| + L\alpha(k))}{n}
,\label{lem_opt_dist:Eq1a}
\end{align}
where the last inequality is due to
\begin{align*}
&\sum_{i=1}^n\|\xbarbf(k)-\vbf_i(k)\|\notag\\
&\leq \sum_{i=1}^n\left\|\xbarbf(k)-\sum_{j=1}^na_{ij}\xbf_j(k) + \Delta_i(k) -\sum_{j\in\Ncal_i}a_{ij}\Delta_j(k) - \alpha(k)\gbf_i(\xbf_i(k))\right\|\notag\\
&\leq \sqrt{n}\|\Xbf(k)-\1\xbarbf(k)^T\| + 2\sqrt{n}\|\Delta(k)\| + L\alpha(k),
\end{align*}
which uses the Jensen's inequality. 
Next, we analyze the third term on the right-hand side of Eq.\ \eqref{lem_opt_dist:Eq1}
\begin{align}
&-\frac{2\alpha(k)}{n}(\xbarbf(k)-\xbf^*)^T\sum_{i=1}^n \gbf_i(\xbf_i(k))\notag\\
&\qquad = -\frac{2\alpha(k)}{n}\sum_{i=1}^n\gbf_i(x_i(k))^T(\xbarbf(k)-\xbf_i(k)) -\frac{2\alpha(k)}{n}\sum_{i=1}^n\gbf_i(\xbf_i(k))^T(\xbf_i(k)-\xbf^*)\notag\\
&\qquad \leq \frac{2L\alpha(k)}{n}\|\Xbf(k)-\1\xbarbf(k)^T\| -\frac{2\alpha(k)}{n}\sum_{i=1}^n\gbf_i(\xbf_i(k))^{T}(\xbf_i(k)-\xbf^*).\label{lem_opt_dist:Eq1b}
\end{align}
Substituting Eqs.\ \eqref{lem_opt_dist:Eq1a} and \eqref{lem_opt_dist:Eq1b} into \eqref{lem_opt_dist:Eq1} we obtain \begin{align*}
\|\rbf(k+1)\|^2 
&\leq  \|\rbf(k)\|^2  -\frac{2\alpha(k)}{n}\sum_{i=1}^n\gbf_i(\xbf_i(k))^{T}(\xbf_i(k)-\xbf^*) \notag\\
&\qquad + \frac{2\left(2\sqrt{2}\|\Delta(k)\|+\sqrt{2}L\alpha(k)\right)}{\sqrt{n}}\|\Xbf(k)-\1\xbarbf(k)^T\|\notag\\ 
&\qquad +  \frac{2\left(2\sqrt{2}\|\Delta(k)\|+\sqrt{2}L\alpha(k)\right)(2\sqrt{n}\|\Delta(k)\| + L\alpha(k))}{n}
\notag\\
&\qquad  + \frac{2L\alpha(k)}{n}\|\Xbf(k)-\1\xbarbf(k)^T\| +  \frac{16\Delta^2(k)+6L^2\alpha^2(k)}{n},
\end{align*}
which gives us Eq.\ \eqref{lem_opt_dist:Ineq}, i.e.,

\begin{align*}
\|\rbf(k+1)\|^2 &\leq \|\rbf(k)\|^2-\frac{2\alpha(k)}{n}\sum_{i=1}^n\gbf_i(\xbf_i(k))^{T}(\xbf_i(k)-\xbf^*) \notag\\
&\qquad  + \frac{2\left(4\|\Delta(k)\|+3L\alpha(k)\right)}{\sqrt{n}}\|\Xbf(k)-\1\xbarbf(k)^T\|\notag\\ 
&\qquad + \frac{32\|\Delta(k)\|^2 + 10L^2\alpha^2(k)+12L\alpha(k)\|\Delta(k)\|}{\sqrt{n}}\notag\\
&\leq \|\rbf(k)\|^2-\frac{2\alpha(k)}{n}\sum_{i=1}^n\gbf_i(\xbf_i(k))^{T}(\xbf_i(k)-\xbf^*) \notag\\
&\qquad  + \frac{2\left(4\sqrt{nd}\gamma+3L2^{b}\right)}{\sqrt{n}(2^b-1)}\alpha(k)\|\Xbf(k)-\1\xbarbf(k)^T\|\notag\\ 
&\qquad + \frac{2(4\sqrt{nd}\gamma + 3L2^{b})^2}{(2^b-1)^2\sqrt{n}}\alpha^2(k),
\end{align*}
where the last inequality we use Eq.\  \eqref{lem_quantized_error:error_bound} to have
\begin{align*}
\|\Delta(k)\| \leq \frac{\sqrt{nd}\gamma}{2^b-1}\alpha(k).
\end{align*}
\end{proof}

\end{document}